\renewcommand{\bar}{\overline}
\newcommand{\eps}{\varepsilon}
\newcommand{\CC}{\mathbb{C}}
\newcommand{\FF}{\mathbb{F}}
\newcommand{\PP}{\mathbb{P}}
\newcommand{\RR}{\mathbb{R}}
\newcommand{\ZZ}{\mathbb{Z}}
\newcommand{\Cv}{\CC_v}
\newcommand{\ints}{{\mathcal O}}
\newcommand{\maxid}{{\mathcal M}}
\newcommand{\calF}{{\mathcal F}}
\newcommand{\calJ}{{\mathcal J}}
\newcommand{\Kbar}{\overline{K}}
\newcommand{\PCv}{\PP^1(\Cv)}
\newcommand{\PK}{\PP^1(K)}
\newcommand{\PL}{\PP^1(L)}
\newcommand{\PKbar}{\PP^1(\overline{K})}
\newcommand{\Ber}{\textup{Ber}}
\newcommand{\PBerk}{\PP^1_{\Ber}}
\DeclareMathOperator{\PGL}{PGL}
\DeclareMathOperator{\ord}{ord}
\DeclareMathOperator{\lcm}{lcm}
\newcommand{\Dbar}{\bar{D}}
\newcommand{\dsps}{\displaystyle}
\theoremstyle{plain}
\newtheorem{thm}{Theorem}[section]
\newtheorem{lemma}[thm]{Lemma}
\newtheorem{conj}{Conjecture}
\newtheorem*{thmA}{Theorem A}
\newtheorem*{thmB}{Theorem B}
\theoremstyle{definition}
\newtheorem{defin}[thm]{Definition}
\theoremstyle{remark}
\newtheorem{remark}[thm]{Remark}
\numberwithin{equation}{section}
\title[Attaining Potentially Good Reduction]
{Attaining Potentially Good Reduction in Arithmetic Dynamics}
\date{October 25, 2014}
\subjclass[2010]{Primary: 37P05 Secondary: 37P20, 11S82}
\keywords{arithmetic dynamics, good reduction, periodic points}
\author{Robert~L. Benedetto}
\address{Amherst College \\ Amherst, MA 01002, USA}
\email{rlbenedetto@amherst.edu}
\begin{document}

\newcounter{bean}
\newcounter{sheep}

\begin{abstract}
Let $K$ be a non-archimedean field, and
let $\phi\in K(z)$ be a rational function of degree $d\geq 2$.
If $\phi$ has potentially good reduction, we give an upper
bound, depending only on $d$, for the minimal degree of
an extension $L/K$ such that $\phi$ is conjugate over $L$ to
a map of good reduction.  In particular, if $d=2$ or $d$ is
less than the residue characteristic of $K$, the bound is $d+1$.
If $K$ is discretely valued, we give examples to
show that our bound is sharp.
\end{abstract}

\maketitle

Fix the following notation throughout this paper.
\begin{tabbing}
\hspace{1cm} \= \hspace{2cm} \=  \kill
\> $K$: \> a field \\
\> $\Kbar$: \> an algebraic closure of $K$ \\
\> $|\cdot|$: \> a non-archimedean absolute value on $\Kbar$ \\
\> $\Cv$: \> the completion of $\Kbar$ with respect to $|\cdot|$ \\
\> $\ints_K$: \> the ring of integers $\{x\in K : |x| \leq 1\}$ of $K$ \\
\> $\maxid_K$: \> the maximal ideal $\{x\in K : |x| < 1\}$ of $\ints_K$ \\
\> $k$: \> the residue field $\ints_K/ \maxid_K$ of $K$
\end{tabbing}


Let $\phi(z)\in K(z)$ be a rational function.
We define the degree of $\phi=f/g$ to be
$\deg\phi:=\max\{\deg f, \deg g\}$,
where $f,g\in K[z]$ have no common factors.
We will view $\phi$ as a a dynamical system acting on
$\PCv=\Cv\cup\{\infty\}$.
For a thorough treatment of dynamics over such non-archimedean fields,
see Chapter~10 of \cite{BR}, or \cite{BenAZ}.

The notion of \emph{good reduction} of $\phi$ first
appeared in \cite{MS1}; see Definition~\ref{def:goodred}.
We say that $\phi$ has \emph{potentially good reduction}
if $\phi$ is conjugate over $L$ to a map of good reduction,
for some finite extension $L/K$.
In \cite{BenPG1}, we gave a necessary and sufficient condition
for whether or not $\phi$ has potentially good reduction.
In this paper, we turn to a related question: if $\phi$ has
potentially good reduction, how large an extension $L/K$
is required to attain good reduction?
In Theorem~3.6 of \cite{Rum},
Rumely showed that when $\deg\phi=d\geq 2$
and $\phi$ has potentially good reduction,
there is an extension $L/K$ of degree at most $(d+1)^2$
such that $\phi$ is conjugate over $L$ to a map of good reduction.
In this paper, we improve Rumely's bound, as follows.

\begin{thmA}
Let $K$ be a field with non-archimedean absolute value $|\cdot|$
and residue characteristic $p\geq 0$,
and let $\phi\in K(z)$ be a rational function of degree $d\geq 2$.
Define
$$B = B(p,d) := \begin{cases}
p^e (d-1) & \text{ if } d\geq 3 \text{ and } d=mp^e
\\
& \quad \text{ for integers } e,m\geq 1 \text{ with } p\nmid m,
\\
dp^e & \text{ if } d\geq 3 \text{ and } d=1 + mp^e
\\
& \quad \text{ for integers } e,m\geq 1 \text{ with } p\nmid m,
\\
d+1 & \text{ otherwise}.
\end{cases}
$$
If $\phi$ has potentially good reduction, then there is an extension
$L/K$ with $[L:K] \leq B$
such that $\phi$ is conjugate over $L$ to a map of good reduction.
\end{thmA}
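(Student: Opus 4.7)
The plan is to exploit the Berkovich-space characterization of potentially good reduction: a rational map $\phi$ of degree $d \geq 2$ has potentially good reduction if and only if $\phi$ fixes a type II point $\zeta$ in the Berkovich projective line $\PBerk$, and any $\sigma \in \PGL_2(\Cv)$ with $\sigma(\zeta_G) = \zeta$ (where $\zeta_G$ is the Gauss point) yields a conjugate $\sigma^{-1}\phi\sigma$ of good reduction. The task therefore reduces to locating a type II fixed point of $\phi$ realizable over an extension of $K$ of degree at most $B(p,d)$.

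First I would pass to $\Cv$ while tracking the action of $\Gal(\Kbar/K)$. The set $F \subset \PBerk$ of type II fixed points of $\phi$ is $\Gal(\Kbar/K)$-invariant and, by the hypothesis of potentially good reduction, nonempty. Building on the analysis in \cite{BenPG1}, $F$ is either a single point or a connected subtree, and in either case one can extract a canonical Galois-invariant element $\zeta$, whose corresponding disk $D(a,r) \subset \Cv$ is then $\Gal(\Kbar/K)$-stable as a subset of $\Cv$.

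Second, I would read off $D = D(a,r)$ from the $d+1$ fixed points of $\phi$ in $\PKbar$. In coordinates where $\zeta$ becomes the Gauss point, these fixed points cluster inside $D$ according to the fixed points of the reduced map $\bar\phi$ on $\PFbar$. If two fixed points $P, Q$ of $\phi$ lie in distinct residue classes inside $D$, then $|P - Q| = r$, so the conjugation $\sigma(z) = (P - Q)z + P$ is defined over $L = K(P,Q)$ and realizes $\zeta$. Hence $[L:K]$ is at most the size of the smallest Galois orbit on such pairs, which in the generic case yields the bound $d+1$ since the total number of fixed points is $d+1$.

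Third, and this is the heart of the matter, I would analyze the exceptional cases $d \equiv 0 \pmod p$ and $d \equiv 1 \pmod p$, in which the fixed points of $\bar\phi$ collapse. In these cases $\bar\phi(z) - z$ acquires higher-order zeros because of $p$-divisibility of relevant coefficients, so all fixed points of $\phi$ may lie in a single residue class of $D$, defeating the pair argument above. The main obstacle is then pinning down $\zeta$ from auxiliary data: one combines a ramification contribution of index $p^e$ (since $r$ is no longer visible in $|K^\times|$ directly from the fixed points) with a Galois orbit of size $d-1$ or $d$ coming from other points, such as preimages of fixed points or low-period preperiodic points. These combine multiplicatively to yield the bounds $p^e(d-1)$ and $dp^e$ in the two exceptional cases.
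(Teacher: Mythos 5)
Your high-level framework is correct and matches the paper's: use the Berkovich characterization of potentially good reduction as the existence of a (unique) totally invariant type~II point $\xi$ (Lemma~\ref{lem:invarII}), and reduce the problem to finding a change of coordinates defined over a small extension that carries $\xi$ to the Gauss point. You also correctly anticipate that the exceptional cases are governed by whether $p\mid d$ or $p\mid(d-1)$. But the middle and final steps of your argument have real gaps.

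First, the claim that a pair of fixed points $P,Q$ in distinct residue classes yields the bound $d+1$ does not follow as stated. The naive degree of $K(P,Q)/K$ can be as large as $(d+1)d$, which is essentially Rumely's earlier bound, not the improved one. The paper avoids this by a case analysis not on the fixed points directly but on how many components of $\PBerk\smallsetminus\{\xi\}$ already contain $K$-rational points. If at least three do, Lemma~\ref{lem:btmap} gives good reduction over $K$ itself; if exactly two do, one only needs \emph{one} additional algebraic point to populate a third component, and the degree of that point is controlled ($d-1$, $d$, or $d+1$ depending on whether the two components are fixed, attracting, etc.\ via Theorems~\ref{thm:twocomps1} and~\ref{thm:twocomps2}); and only when $\PP^1(K)$ meets a single component does one have to multiply two extension degrees. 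Your proposal skips this structure, so you never actually establish $d+1$ even in the ``generic'' case.

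Second, and more seriously, the proposal gestures at a ``ramification contribution of index $p^e$'' without any mechanism for producing it. The heart of the paper is Lemma~\ref{lem:pextn}: given a degree-$n$ polynomial with all roots in a closed disk $V$, one can find $\alpha\in V$ with $[K(\alpha):K]\leq p^{v_p(n)}$, via the auxiliary polynomial $g(t)$ built from the $z^{n-q}$-coefficient of $f(z+t)$ and the unit $\binom{n}{q}$. This is what converts a degree-$n$ set of roots into a degree-$p^e$ extension, and it is applied with $n=d-1$ (indifferent component, Theorem~\ref{thm:indiffext}) or $n=d$ (attracting component, Theorem~\ref{thm:attrext}); the distinction tracks Rivera-Letelier's dichotomy of fixed Fatou components (Lemma~\ref{lem:juanclass}), which your proposal does not invoke. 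Without this lemma, ``all fixed points collapse into one residue class'' is an observation, not a way forward: you still have no way to locate a point outside that residue class in degree $p^e$, and the two factors you want to ``combine multiplicatively'' are not produced by the argument you outline.
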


For $d\geq 3$, the bound $B$ of the Theorem~A is at most $d(d-1)$.
Moreover, if $d=2$, or if $p>d$, then $B$ is simply $d+1$.
In addition, $B$ is a sharp bound, at least for many choices
of the field $K$.

\begin{thmB}
Let $K$ be a field with
non-archimedean absolute value $|\cdot|$
and residue characteristic $p\geq 0$,
and let $d\geq 2$ be an integer.
Define the integer $B$ as in Theorem~A.
Suppose $K$ contains an element $\pi$ such that
for any integer $n\geq 2$ with $n|B$,
$|\pi|^{1/n}\not\in |K|$.
Then there is a rational function $\phi\in K(x)$ of degree $d$
and potentially good reduction
such that for any extension field $L$ with $[L:K]<B$,
$\phi$ is not conjugate over $L$ to a map of good reduction.
\end{thmB}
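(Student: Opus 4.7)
The plan is to construct, in each of the three cases defining $B=B(p,d)$, an explicit $\phi\in K(z)$ of degree $d$ with potentially good reduction, and then to show that conjugating $\phi$ to a map of good reduction requires an extension of $K$ of degree at least $B$. The crucial reformulation, available from the Berkovich theory, is that $\phi$ has potentially good reduction if and only if there is a type~II point $\zeta$ in the Berkovich projective line over $\Cv$ that is totally fixed by $\phi$; such $\zeta$ is unique (for $d\geq 2$) and corresponds to a closed disk $D(a,r)\subseteq\Cv$. Moreover, $\phi$ is conjugate over $L$ to a map of good reduction if and only if $\zeta$ is defined over $L$, i.e., $r\in|L^\times|$ and $D(a,r)\cap L\neq\emptyset$. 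Thus the problem reduces to engineering $\phi\in K(z)$ so that the minimal field of definition of the associated disk is an extension of degree exactly $B$.

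In the generic case $B=d+1$, a natural candidate is $\phi(z)=\pi/z^d$, whose $d+1$ fixed points are precisely the $(d+1)$-th roots of $\pi$ in $\Cv$. Conjugating by $\gamma(z)=\pi^{1/(d+1)}z$ yields $\phi^\gamma(z)=1/z^d$, which has good reduction; hence the associated good-reduction disk is $D(0,|\pi|^{1/(d+1)})$. For this disk to have radius in $|L^\times|$, there must exist $y\in L^\times$ with $|y|^{d+1}=|\pi|$, and so $[K(y):K]$ divides $d+1$. The hypothesis $|\pi|^{1/n}\notin|K|$ for every $n\mid(d+1)$ with $n\geq 2$ then forces $[K(y):K]=d+1$, proving the bound.

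In the two wild cases, where $d=mp^e$ or $d=1+mp^e$, the fixed points alone are too coarse to distinguish $B$ from its proper divisors, so one must instead choose $\phi$ whose ramification data encodes $\pi$ to the precise $B$-th power. I would start from canonical forms of good-reduction maps like $z^d$ or $z+z^{-(d-1)}$ and perturb them by terms involving $\pi$ with exponents chosen in terms of $e$ and $m$, so as to produce a good-reduction disk of radius $|\pi|^{1/B}$. The main obstacle is showing that the chosen $\phi$ forces the full degree-$B$ extension and not merely a proper divisor of $B$: this should follow from computing a conjugacy-invariant quantity---such as the absolute value of a resultant of numerator and denominator, or the multiplier of a periodic cycle---whose value lies in $|\pi|^{1/B}\cdot|K^\times|$ but not in $|\pi|^{1/n}\cdot|K^\times|$ for any proper divisor $n$ of $B$. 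Combined with the hypothesis $|\pi|^{1/n}\notin|K|$ for all divisors $n\geq 2$ of $B$, this rules out each intermediate extension in turn. The subtlest part of the argument is the interaction between the tame factor ($d-1$ or $d$) and the wild factor $p^e$, which is precisely where the bound $B$ takes its unusual form.
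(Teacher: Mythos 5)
Your high-level framework is correct, and it is the same one the paper uses: reduce to the uniqueness of the totally invariant type~II point of $\phi$ (Lemma~\ref{lem:invarII}), and then constrain the degree of any extension over which that point can be moved to the Gauss point. Your example $\phi(z)=\pi/z^d$ for the generic case $B=d+1$ is exactly the paper's Case~3, and although your phrase ``so $[K(y):K]$ divides $d+1$'' is not literally true, the intended argument --- that $|\pi|^{1/(d+1)}\in|L^\times|$ forces the ramification index, and hence $[L:K]$, to be a multiple of $d+1$ --- is sound and is what the paper extracts from Theorem~\ref{thm:IIfod}.a.

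The gap is in the wild cases, and it is not merely that you haven't written down the maps. Your sketch aims to ``produce a good-reduction disk of radius $|\pi|^{1/B}$,'' implicitly with a $K$-rational (or at least low-degree) center, and then to detect degree $B$ via the radius alone. But this plan runs into Lemma~\ref{lem:diskscale}: if $\phi\in K(z)$ maps a disk of radius $r$ to itself with local degree $\ell\geq 2$, then $r^{\ell-1}\in|K^\times|$, so the achievable radii are already quantized by $|K^\times|$ and by divisibility in terms of $\deg\phi$. You would then need $B\mid(\ell-1)$ with $\ell\leq\deg(\phi^j)$ for a small $j$, a delicate and unverified constraint. The paper's actual constructions sidestep this entirely: in Case~1 the invariant point is $\zeta(\alpha,|\beta|^m)$ with $|\alpha|=|\pi|^{1/q}$ and $|\beta|^m=|\pi|^{m/(d-1)}$; in Case~2 it is $\zeta(\alpha^{q-1},|\beta|^{d-1})$ with $|\alpha^{q-1}|=|\pi|^{(q-1)/q}$ and $|\beta|^{d-1}=|\pi|^{(d-1)/d}$. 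In both cases the \emph{center} is strictly larger than the radius, so part~(b) of Theorem~\ref{thm:IIfod} applies, and the bound on $[\tilde L:K]$ is $\lcm(q,d-1)$ (resp.\ $\lcm(q,d)$), where the $q$ comes from $|\alpha|$ and the $d-1$ (resp.\ $d$) from the radius. Your proposal only exploits one of these two sources, and your fallback of ``computing a conjugacy-invariant quantity'' such as a resultant is a heuristic, not a construction; without the explicit maps $\phi(z)=z+(\pi^{-1}z^q-1)^m$ and $\phi(z)=z+\pi^{d-1}/(z^q-\pi^{q-1})^m$, and without the center--radius $\lcm$ argument, the wild cases are not proved.
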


The existence of the element $\pi$ in the
hypothesis of Theorem~B certainly holds
if $K$ is discretely valued.  After all, in
that case, we may choose $\pi$ to be a uniformizer for $K$.


The outline of the paper is as follows.
We will recall some general facts about arithmetic dynamics
and the Berkovich projective line in Section~\ref{sec:background}.
In Section~\ref{sec:lemmas}, we will state and prove some
auxilliary results that we will need.
We will then prove Theorem~A in Section~\ref{sec:proofA},
and Theorem~B in Section~\ref{sec:proofB}.
Finally, in Section~\ref{sec:ramify}, we will discuss
a conjectural strengthening of Theorem~A, that the extension $L/K$
can always be chosen to be totally ramified.

\section{Background}
\label{sec:background}

If $n\in\ZZ$ is a nonzero integer and $p\geq 2$ is prime,
we use the standard notation $v_p(n)$ to indicate the
largest integer $e\geq 0$ such that $p^e|n$.  For $p=0$,
we will use the following conventions:
$v_p(n)=0$ for all $n\in\ZZ\smallsetminus\{0\}$, and $p^0=1$.

A map $\phi\in K(z)$ of degree $d=\deg\phi\geq 2$
has exactly $d+1$ fixed points
in $\PCv$, counted with appropriate multiplicity.
If $x\in\Cv$ is such a fixed point, its
\emph{multiplier} is $\lambda:=\phi'(x)$.
Given $h\in\PGL(2,\Cv)$, $h(x)$ is a fixed point
of $\psi:=h\circ\phi\circ h^{-1}$, and the multiplier
of $h(x)$ under $\psi$ is also $\lambda$.  Thus,
we can define the multiplier of a fixed point at $\infty$
by changing coordinates via such a conjugation.

Note that $\lambda=1$ if and only if $x$ has multiplicity
at least two as a fixed point of $\phi$,
and that $\lambda=0$
if and only if $x$ is a critical point of $\phi$.
We say that $x$ is \emph{attracting} is $|\lambda|<1$,
\emph{repelling} if $|\lambda|>1$,
and \emph{indifferent} if $|\lambda|=1$.

Given a polynomial $f(z)\in \ints_{K}[z]$, denote by
$\overline{f}(z)\in k[z]$ the polynomial formed by
reducing all coefficients of $f$ modulo $\maxid_{K}$.

\begin{defin}
\label{def:goodred}
Let $\phi(z)\in K(z)$ be a rational function
of degree $d\geq 1$.  Write $\phi=f/g$ with
$f,g\in\ints_K[z]$ and with at least one coefficient
of $f$ or $g$ having absolute value $1$.
Let $\overline{\phi}:=\overline{f}/\overline{g}\in k(z)\cup\{\infty\}$.
We say that $\phi$ has {\em good reduction}
if $\deg \overline{\phi} = \deg \phi$.
Otherwise,
we say that $\phi$ has \emph{bad reduction}.

We say that $\phi$ has \emph{potentially good reduction}
if there is a finite extension $L/K$ in $\Kbar$
and a map $h\in\PGL(2,L)$ such that
$h\circ\phi\circ h^{-1}\in L(z)$ has good reduction.
\end{defin}


It is easy to see that for any $\phi\in K(z)$,
polynomials $f,g\in\ints_K[z]$ exist with $\phi=f/g$
and with at least one coefficient of $f$ or $g$ having
absolute value $1$.  Moreover, the reduction type
(good or bad) of $\phi$ is independent of the choice
of the pair $f,g$.
If $\phi\in K[z]$ is a polynomial, then it has good reduction
if and only if $\phi\in\ints_K[z]$ and the lead coefficient
of $\phi$ has absolute value $1$.


Given $a\in\Cv$ and $r>0$,
$$D(a,r) := \{x\in\Cv : |x-a|<r\}$$
and
$$\Dbar(a,r) := \{x\in\Cv : |x-a|\leq r\}$$
will denote the open and closed disks, respectively,
in $\Cv$ containing $a$ and of radius $r$.
If $r\in |\Cv^{\times}|$, we say that the above
disks are \emph{rational}.

If $\phi\in\Cv(z)$ is a nonconstant rational function,
and if $D(a,r)\subseteq\Cv$ is a rational open disk
containing no poles of $\phi$, then we may write
$$\phi(z) = \sum_{n\geq 0} c_n (z-a)^n$$
as a power series converging on $D(a,r)$.
In that case, $\{|c_n|r^n: n\geq 0\}$ is bounded
and attains its maximum.  Moreover, this power series
has an associated \emph{Weierstrass degree}, which
is the smallest integer $\ell\geq 0$ such that
$$|c_{\ell}|r^{\ell} = \max\{ |c_n| r^n : n\geq 0\}.$$
It is a consequence of the Weierstrass Preparation Theorem
(and more specifically, of the theory of Newton polygons)
that in that case, the image $\phi(D(a,r))$ is also
a rational open disk.
Moreover, if that image disk $D(b,s)$ contains the point $0$,
then the Weierstrass degree $\ell$ of $\phi$ on $D(a,r)$ is 
at least $1$, and the mapping
$$\phi: D(a,r) \to D(b,s)
\quad \text{is everywhere $\ell$-to-1},$$
meaning that every point $y\in D(b,s)$ has exactly $\ell$
preimages in $D(a,r)$, counting multiplicity.
In particular, we must have $1\leq \ell\leq \deg\phi$.



The Berkovich projective line $\PBerk$ over $K$ is a certain
compact Hausdorff topological space
containing $\PCv$ as a subspace.  
The precise definition,
which uses multiplicative seminorms on $\Cv$-algebras,
is rather involved; the interested reader may consult
Berkovich's original monograph \cite{Ber}, the detailed
exposition in \cite{BR}, or the summary in \cite{BenAZ},
for example.
We only state some basic properties here,
without proofs.

The space $\PBerk$ is uniquely path-connected:
given any two distinct points
$\xi_0,\xi_1\in\PBerk$, there is a \emph{unique} arc
between them.  That is, there there is a unique subspace
$X\subseteq\PBerk$ homeomorphic to the interval
$[0,1]\subseteq\RR$,
where $\xi_0,\xi_1\in X$, and the homeomorphism takes
$\xi_0$ to $0$ and $\xi_1$ to $1$.

For each closed disk $\Dbar(a,r)\subseteq\Cv$, the space
$\PBerk$ contains a unique associated point,
which we shall denote $\zeta(a,r)$.
If $r\in |\Cv^{\times}|$, then $\zeta(a,r)$ is said to be
a \emph{type~II point}.  For completeness, we note that the
points $\zeta(a,r)$ with $r>0$ but $r\not\in |\Cv^{\times}|$
are of type~III.  The type~I points are simply points of $\PP^1(\Cv)$.
There are also type~IV points, corresponding to decreasing
chains of disks in $\Cv$ with empty intersection.  However,
in this paper we will only be concerned with the type~II points.

For any type~II point $\zeta(a,r)\in\PBerk$, the complement
$\PBerk\smallsetminus\{\zeta(a,r)\}$ is no longer connected, but
instead consists of infinitely many connected components.
When intersected with $\PCv$, one of these components
is $\PCv\smallsetminus\Dbar(a,r)$, while the rest are
the infinitely many rational open disks
$D(b,r)\subseteq\PCv$ with $b\in\Dbar(a,r)$.

Finally, any rational function $\phi\in\Cv(z)$
extends uniquely to a continuous function $\phi:\PBerk\to\PBerk$.
In particular, if $D(a,r)\subseteq\Cv$ is a rational open disk
containing no poles of $\phi$, then
$\phi(\zeta(a,r))=\zeta(b,s)$, where $D(b,s)=\phi(D(a,r))$
is the image in $\Cv$ of $D(a,r)$ under $\phi$.

\section{Some Lemmas}
\label{sec:lemmas}

We say a point $\xi\in\PBerk$ is
\emph{totally invariant} under $\phi\in K(z)$ if
$\phi^{-1}(\xi) = \{\xi\}$.

\begin{lemma}
\label{lem:invarII}
Let $K$ be a non-archimedean field, and let $\phi\in K(z)$ be a
rational function of degree $d\geq 2$.  Then
\begin{enumerate}
\item 
$\phi$ has good reduction if and only if
the Gauss point $\zeta(0,1)$ is totally invariant under $\phi$.
\item 
$\phi$ has potentially good reduction if and only if
there is some type~II point $\xi\in\PBerk$ that
is totally invariant under $\phi$.
In that case, $\xi$ is the only totally invariant
type~II point in $\PBerk$.
\end{enumerate}
\end{lemma}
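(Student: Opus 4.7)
The plan hinges on the fact that the continuous extension $\phi\colon\PBerk\to\PBerk$ carries a well-defined local degree function $\deg_\xi(\phi)\in\{1,\ldots,d\}$ satisfying $\sum_{\phi(\eta)=\xi}\deg_\eta(\phi)=d$. A fixed point $\xi$ of $\phi$ is therefore totally invariant if and only if $\deg_\xi(\phi)=d$.

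For part~(a), I would normalize $\phi=f/g$ with $f,g\in\ints_K[z]$ coprime and with at least one coefficient of $f$ or $g$ of absolute value $1$. Under this normalization the Gauss point $\zeta(0,1)$ is automatically fixed by $\phi$, and the induced action of $\phi$ on the set of tangent directions at $\zeta(0,1)$---canonically identified with $\PP^1(k)$ via reduction---is the map $\bar\phi=\bar f/\bar g$. Since $\deg_{\zeta(0,1)}(\phi)$ equals the degree of this directional action, good reduction ($\deg\bar\phi=d$) is equivalent to $\deg_{\zeta(0,1)}(\phi)=d$, which in turn is equivalent to total invariance of $\zeta(0,1)$. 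This handles both directions of~(a).

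For part~(b), the ``only if'' direction follows from~(a): if $\psi=h\phi h^{-1}$ has good reduction over some finite $L/K$, then $\zeta(0,1)$ is totally $\psi$-invariant, so $h^{-1}(\zeta(0,1))$ is totally $\phi$-invariant, and it is of type~II because elements of $\PGL(2,L)$ preserve the type stratification. Conversely, given a totally invariant type~II point $\xi=\zeta(a,r)$ with $a\in\Cv$ and $r\in|\Cv^\times|=|\Kbar^\times|$, I use density of $\Kbar$ in $\Cv$ to choose $a'\in\Kbar$ with $|a-a'|\le r$ (so that $\zeta(a',r)=\xi$), and I pick $c\in\Kbar^\times$ with $|c|=r$. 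Since $a',c$ lie in some finite extension $L/K$, $h(z)=(z-a')/c\in\PGL(2,L)$ carries $\xi$ to $\zeta(0,1)$, and the converse in (a) applied over $L$ gives that $h\phi h^{-1}$ has good reduction.

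The main obstacle is the uniqueness claim. Suppose $\xi_1\ne\xi_2$ are both totally invariant type~II points. Conjugating over $\Cv$, I may assume $\xi_1=\zeta(0,1)$, so $\phi$ has good reduction, and the task reduces to showing that no type~II point other than $\zeta(0,1)$ is totally invariant under a good-reduction map. Writing $\xi_2=\zeta(a,r)$, and using $z\mapsto1/z$ (which fixes the Gauss point and preserves good reduction) to reduce to $r\le1$ and $|a|\le1$ with $r<1$ (since $r=1,|a|\le1$ yields $\xi_2=\zeta(0,1)$), I split on the behavior of $\bar\phi$ at $\bar a\in k$. If $\bar\phi^{-1}(\bar a)$ contains more than one element in $\PP^1(k)$, then the residue-disk structure of good-reduction maps produces a distinct type~II preimage of $\zeta(a,r)$ in each such residue class, contradicting $|\phi^{-1}(\xi_2)|=1$. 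If instead $\bar\phi^{-1}(\bar a)=\{\bar a\}$, expanding $\phi(z)-\phi(a)=\sum_{n\ge1}c_n(z-a)^n$ on the residue disk of $a$ with $|c_d|=1$ and $|c_n|<1$ for $1\le n<d$, the image disk $\phi(D(a,r))$ has radius $\max_{n\ge1}|c_n|r^n<r$, so $\xi_2$ is not even fixed by $\phi$. In either case we derive a contradiction, proving uniqueness.
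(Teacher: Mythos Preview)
The paper does not actually prove this lemma; it simply cites Rivera-Letelier's Th\'eor\`eme~5 and results in Baker--Rumely. Your proposal therefore supplies far more than the paper does, and your overall strategy---local degrees for~(a), transport by $\PGL_2$ for the equivalence in~(b), and a residue-disk dichotomy for uniqueness---is sound and in the spirit of how those cited results are established.

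There is, however, one genuine slip in part~(a): the Gauss point is \emph{not} automatically fixed under your normalization. Take $\phi(z)=\pi z^d$ with $0<|\pi|<1$; then $f=\pi z^d$, $g=1$ is normalized in your sense, yet $\phi(\zeta(0,1))=\zeta(0,|\pi|)\ne\zeta(0,1)$. This does not wreck the argument, but you must secure fixedness separately in each direction. If $\zeta(0,1)$ is totally invariant then it is in particular fixed, since $\phi$ is surjective on $\PBerk$. Conversely, if $\phi$ has good reduction then $\deg\bar\phi=d$ forces $\bar f,\bar g\ne 0$, so both $f$ and $g$ have Gauss norm~$1$; a short computation with the resultant (or directly with the multiplicative seminorm) then gives $\phi(\zeta(0,1))=\zeta(0,1)$. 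Once fixedness is established, your identification of the tangent map with $\bar\phi$ and the equality $\deg_{\zeta(0,1)}(\phi)=\deg\bar\phi$ are valid, and both implications follow.

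Your uniqueness argument for~(b) is correct. The reduction via $z\mapsto 1/z$ to the case $|a|\le 1$, $r<1$ does cover all positions of $\zeta(a,r)$ relative to $\zeta(0,1)$ (strictly containing, strictly contained, or disjoint). In Case~2 you are tacitly using that $\phi(\xi_2)=\xi_2$ forces $\bar\phi(\bar a)=\bar a$, so the unique $\bar\phi$-preimage of $\bar a$ really is $\bar a$; with that noted, the Weierstrass-degree estimate $\max_{n\ge 1}|c_n|r^n<r$ (using $|c_1|<1$ and $|c_n|\le 1$ for $n\ge 2$) shows $\phi(\zeta(a,r))$ has strictly smaller radius, giving the desired contradiction.
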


\begin{proof}
This is Th\'{e}or\`{e}me~5 of \cite{Riv3}.
See also Corollary~9.27(C) and Proposition~10.45 of \cite{BR}.
\end{proof}

\begin{lemma}
\label{lem:btmap}
Let $a_1,a_2,b_1,b_2,c_1,c_2\in\Cv$, and let $\xi_1,\xi_2\in\PBerk$
be type~II points.  Suppose that $a_1$, $b_1$, and $c_1$ all lie in
different components of $\PBerk\smallsetminus\{\xi_1\}$,
and that $a_2$, $b_2$, and $c_2$ all lie in
different components of $\PBerk\smallsetminus\{\xi_2\}$.
Let $h\in\PGL(2,\Cv)$ be the unique linear fractional map
with $h(a_1)=a_2$, $h(b_1)=b_2$, and $h(c_1)=c_2$.
Then $h(\xi_1)=\xi_2$.
\end{lemma}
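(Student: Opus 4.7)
The plan is to use $\PGL(2,\Cv)$ to normalize both triples to $(0,1,\infty)$, reducing the lemma to a uniqueness statement about the Gauss point. Specifically, let $g_i \in \PGL(2,\Cv)$ be the unique linear fractional map sending $(a_i,b_i,c_i) \mapsto (0,1,\infty)$ for $i=1,2$. Since every element of $\PGL(2,\Cv)$ extends to a self-homeomorphism of $\PBerk$ sending type II points to type II points (and hence components of the complement of a type II point to components of the complement of its image), it suffices to prove $g_1(\xi_1) = g_2(\xi_2) = \zeta(0,1)$. Indeed, once this is established, the composition $g_2 \circ h \circ g_1^{-1}$ fixes $\{0,1,\infty\}$, hence equals the identity, so $h(\xi_1) = g_2^{-1} g_1(\xi_1) = g_2^{-1}(\zeta(0,1)) = \xi_2$.

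The remaining task is the following sub-claim: if $\xi$ is a type II point and $0$, $1$, $\infty$ lie in three distinct components of $\PBerk \setminus \{\xi\}$, then $\xi = \zeta(0,1)$. To verify it, I would write $\xi = \zeta(a,r)$ with $r \in |\Cv^{\times}|$ and invoke the component description recalled in Section~\ref{sec:background}. The hypothesis that $\infty$ is separated from $0$ and from $1$ forces $0,1 \in \Dbar(a,r)$, i.e., $|a| \leq r$ and $|1-a| \leq r$; the hypothesis that $0$ and $1$ lie in two different open subdisks $D(b,r) \subseteq \Dbar(a,r)$ forces $|0-1|=1 \geq r$. A short application of the ultrametric inequality to $1 = (1-a) + a$ gives $1 \leq \max\{|a|,|1-a|\} \leq r$, ruling out $r < 1$, so $r = 1$ and $|a| \leq 1$, whence $\Dbar(a,1) = \Dbar(0,1)$ and $\xi = \zeta(0,1)$.

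The only real obstacle is the bookkeeping needed to translate the combinatorial hypothesis ``three points in three distinct components of $\PBerk \setminus \{\xi\}$'' into explicit inequalities on the parameters $(a,r)$ of the corresponding closed disk. Once that dictionary between type II points and rational closed disks is in place, both the reduction via $g_1, g_2$ and the uniqueness check for the Gauss point are essentially immediate.
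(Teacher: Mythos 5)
Your proof is correct, but it takes a genuinely different route from the paper's. The paper argues purely topologically: it assumes $h(\xi_1)\neq\xi_2$ for contradiction, considers the component $U$ of $\PBerk\smallsetminus\{h(\xi_1)\}$ containing $\xi_2$ and the component $V$ of $\PBerk\smallsetminus\{\xi_2\}$ containing $h(\xi_1)$, and uses the unique path-connectedness of $\PBerk$ to show that one of $a_2,b_2,c_2$ would have to lie in $U$ while also lying outside $U$. Your approach instead normalizes both triples to $(0,1,\infty)$ via $g_1,g_2\in\PGL(2,\Cv)$, so that the whole lemma reduces to the concrete assertion that $\zeta(0,1)$ is the \emph{only} type II point separating $0$, $1$, and $\infty$ into three components, which you then verify by translating the combinatorics into ultrametric inequalities on $(a,r)$ with $\xi=\zeta(a,r)$. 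Your reduction is legitimate: $g_2\circ h\circ g_1^{-1}$ fixes $0,1,\infty$, hence is the identity, so $h=g_2^{-1}\circ g_1$ and $h(\xi_1)=g_2^{-1}(\zeta(0,1))=\xi_2$. What the paper's argument buys is independence from the explicit disk description of components (it needs only unique path-connectedness and the fact that a linear fractional map is a homeomorphism); what your argument buys is a self-contained, computation-free-of-topology proof of the exact uniqueness statement and an explicit identification of $h(\xi_1)$ rather than a proof by contradiction. Both use only material recalled in the paper's Section~2.
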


\begin{proof}
Suppose $h(\xi_1)\neq \xi_2$.  Let $U$ be the component of
$\PBerk\smallsetminus\{h(\xi_1)\}$ containing $\xi_2$,
and let $V$ be the component of $\PBerk\smallsetminus\{\xi_2\}$
containing $h(\xi_1)$.
By the continuity of the map $h^{-1}$,
the points $a_2$, $b_2$, $c_2$ must lie in separate
components of $\PBerk\smallsetminus\{h(\xi_1)\}$.
By hypothesis, they also lie in separate components of
$\PBerk\smallsetminus\{\xi_2\}$.  Thus, $U$ and $V$
can each contain at most one of these three points; without loss,
$c_2\in\PBerk\smallsetminus (U\cup V)$.

Since $c_2\not\in V$, the unique arc $\gamma\subseteq\PBerk$
from $c_2$ to $h(\xi_1)$ must pass through $\xi_2$.
But then the unique arc $\gamma'\subseteq\PBerk$ from $c_2$ to $\xi_2$
does \emph{not} pass through $h(\xi_1)$, and hence $c_2\in U$,
a contradiction.
\end{proof}

The next Lemma concerns the dynamics of $\phi$ on the components of
$\PBerk\smallsetminus\{\xi\}$ when $\xi$ is totally invariant.
It is merely a weak version of a
special case of Rivera-Letelier's far more general
Classification Theorem \cite{Riv1} in the context of
potentially good reduction.
However, its proof is much simpler than that of the
full Classification Theorem, and the statement below
will suffice for our purposes.

\begin{lemma}
\label{lem:juanclass}
Let $K$ be a non-archimedean field,
let $\phi\in K(z)$ be a nonconstant rational function,
and suppose that the type~II point $\xi\in\PBerk$ is totally invariant
under $\phi$.
Let $U$ be a component of $\PBerk\smallsetminus\{\xi\}$
such that $\phi(U)\subseteq U$.  Then
$\phi(U)=U$, and either
\begin{enumerate}
\item $\phi:U\to U$ is one-to-one, or
\item $\phi:U\to U$ is $\ell$-to-$1$, for
  some integer $\ell\geq 2$.
  Moreover, there is an attracting fixed point $a\in U\cap\PKbar$; and
  $\lim_{n\to\infty}\phi^n(x)=a$ for all $x\in U\cap\PCv$.
\end{enumerate}
\end{lemma}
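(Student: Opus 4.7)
The plan is to normalize coordinates so that $\xi=\zeta(0,1)$ and $U=D(0,1)$, and then read everything off from the power series of $\phi$ on $\Dbar(0,1)$. All conclusions of the lemma ($\phi|_U$ one-to-one or $\ell$-to-$1$, existence of an attracting fixed point in $U\cap\PKbar$, and orbit convergence) are invariant under conjugation by $\PGL(2,\Kbar)$, so I would first conjugate to move $\xi$ to the Gauss point; by Lemma~\ref{lem:invarII}(a) the conjugated $\phi$ then has good reduction. A further conjugation fixing $\zeta(0,1)$ (a translation $z\mapsto z-b$, or $z\mapsto 1/z$ in case $U$ is the component containing $\infty$) arranges $U=D(0,1)$.

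Expanding $\phi(z)=\sum_{n\geq 0}c_n z^n$ on $\Dbar(0,1)$, good reduction gives $|c_n|\leq 1$, while $\phi(0)\in U$ gives $|c_0|<1$. Because $\bar\phi$ fixes the point $\bar 0\in\PP^1(k)$ corresponding to $U$, there is a smallest index $\ell\geq 1$ with $|c_\ell|=1$; this $\ell$ is both the Weierstrass degree of $\phi$ on $D(0,1)$ and the local degree of $\bar\phi$ at $\bar 0$. It follows that $\phi(U)=D(c_0,1)=D(0,1)=U$, and by the Weierstrass-preparation facts recalled in Section~\ref{sec:background}, $\phi:U\to U$ is everywhere $\ell$-to-$1$. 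If $\ell=1$ we are in case~(a); otherwise $\ell\geq 2$ and, since $|c_1|<|c_\ell|=1$ is forced by minimality of $\ell$, we also have $|c_1|<1$.

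To produce the attracting fixed point when $\ell\geq 2$, I would apply the same Weierstrass analysis to $\psi(z):=\phi(z)-z$, whose coefficients are $c_0,\;c_1-1,\;c_2,\;c_3,\ldots$ with $|c_0|<1$, $|c_1-1|=1$, and $|c_n|\leq 1$ for $n\geq 2$. So $\psi$ has Weierstrass degree exactly $1$ on $D(0,1)$ and hence a unique zero $a\in D(0,1)$. That $a$ is the desired fixed point, and $a\in U\cap\PKbar$ since $\psi$ has coefficients algebraic over $K$. Differentiating, $\phi'(a)=c_1+\sum_{n\geq 2}n c_n a^{n-1}$; using $|c_1|<1$ and $|a|<1$, each term has absolute value strictly less than $1$, whence $|\phi'(a)|<1$ and $a$ is attracting.

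For orbit convergence, I would factor $\phi(x)-a=\phi(x)-\phi(a)=(x-a)\,g(x)$ with $g(x)=\sum_{n\geq 1}c_n\bigl(x^{n-1}+x^{n-2}a+\cdots+a^{n-1}\bigr)$, and then estimate $|g(x)|\leq\max(|c_1|,|x|,|a|)<1$ for every $x\in U\cap\Cv$. Setting $M:=\max(|c_1|,|a|)<1$, an easy induction shows that $|\phi^n(x)-a|$ is strictly decreasing: whenever it exceeds $M$ one has $|\phi^{n+1}(x)-a|\leq|\phi^n(x)-a|^2$, and once the distance drops below $M$ the convergence becomes geometric with ratio $M$. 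The main obstacle should be producing the attracting fixed point, but once one observes that $\ell\geq 2$ forces $|c_1|<1$, the Weierstrass degree of $\phi(z)-z$ is automatically $1$, and both existence and the attracting property fall out of elementary coefficient estimates.
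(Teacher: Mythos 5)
Your proposal is correct and follows essentially the same route as the paper's proof: conjugate over $\Kbar$ so that $\xi=\zeta(0,1)$ and $U=D(0,1)$, read off the Weierstrass degree $\ell$ of the power series to split into cases (a) and (b), and extract the unique fixed point $a$ in $U$ from the Weierstrass/Newton-polygon data of $\phi(z)-z$; your explicit check that $|\phi'(a)|<1$ and the factorization $\phi(x)-a=(x-a)g(x)$ are slightly more direct than the paper's translation to $a=0$, but they amount to the same estimates. One small imprecision worth flagging: to conclude $a\in\PKbar$ you invoke the fact that the power series $\psi$ has coefficients algebraic over $K$, but that alone is insufficient --- $\Kbar$ is not complete, so a power series over $\Kbar$ can well have its zero in $\Cv\smallsetminus\Kbar$; the correct justification (and the one used in the paper) is that $a$ is a fixed point of the rational function $\phi\in\Kbar(z)$, hence a root of the polynomial $f(z)-zg(z)\in\Kbar[z]$ where $\phi=f/g$, and is therefore in $\Kbar$ since $\Kbar$ is algebraically closed.
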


\begin{proof}
After a $\Kbar$-rational change of coordinates, we may assume
that $\xi=\zeta(0,1)$, and that $U=D(0,1)$.
Expand $\phi(z)=\sum_{n\geq 0} c_n z^n\in \Kbar[[z]]$ as a power series
converging on $D(0,1)$.  Since $\phi(U)\subseteq U$ but also
$\phi(\zeta(0,1))=\zeta(0,1)$, we must have $\phi(D(0,1))=D(0,1)$.
In particular, $|c_n|\leq 1$ for all $n\geq 0$, with equality
for at least one $n$.
Let $\ell$ be the Weierstrass degree of $\phi$ on $D(0,1)$,
so that $1\leq \ell\leq d$.  If $\ell=1$, then we are in case~(a),
and we are done.

Otherwise, $\ell\geq 2$.  Thus,
$$|c_n|<1 \text{ for } n<\ell,
\quad |c_{\ell}|=1, \quad\text{and}\quad
|c_n|\leq 1 \text{ for }n>\ell.$$
Therefore, a glance at the Newton polygon of $\phi(z)-z$ shows
that $\phi$ has exactly one fixed point $a$ with $|a|<1$,
although \emph{a priori}, $a$ is 
defined over the completion $\Cv$
of $\Kbar$.  However, since $\phi\in\Kbar(z)$,
all the fixed points of $\phi$, including $a$, are defined over
$\Kbar$.

After a $\Kbar$-rational translation, we may assume that $a=0$,
so that $c_0=0$.  Thus, for any $x\in D(0,1)$,
$$|\phi(x)| \leq \max\{ |c_n x^n| : 1\leq n\leq \ell\}.$$
It follows that $\phi^n(x)\to 0$ as $n\to\infty$.
\end{proof}

Lemma~\ref{lem:juanclass} motivates the following definition.

\begin{defin}
\label{def:juanclass}
Let $K$ be a non-archimedean field,
let $\phi\in K(z)$ be a nonconstant rational function,
and suppose that the type~II point
$\xi\in\PBerk$ is totally invariant under $\phi$.
Let $U$ be a component of $\PBerk\smallsetminus\{\xi\}$.
If $\phi(U)\subseteq U$, we say $U$ is \emph{fixed} by $\phi$.

In that case, if $\phi:U\to U$ is one-to-one, then we say
$U$ is an \emph{indifferent} component; otherwise, we say
$U$ is an \emph{attracting} component.
\end{defin}


If $K$ is complete and $U$ is an attracting
fixed component containing a $K$-rational point $x$,
then the attracting fixed point $a\in U$ of Lemma~\ref{lem:juanclass}.b
must also be $K$-rational, as it is a limit of iterates of $x$.
Thus, each of the remaining fixed points is defined over an extension
of $K$ of degree at most $d$, improving the \emph{a priori}
bound of $d+1$.
Although the same conclusion does not necessarily hold when
$K$ is not complete, it almost does, as the next lemma
makes precise.

\begin{lemma}
\label{lem:attrapprox}
Let $K$ be a non-archimedean field,
let $\phi\in K(z)$ be a rational function of degree $d\geq 2$,
and let $\xi\in\PBerk$ be a type~II point that is totally invariant
under $\phi$.  Suppose that $a\in\PK$ lies
in an attracting fixed component of $\PBerk\smallsetminus\{\xi\}$.
Then there is a polynomial $f\in K[z]$ such that
\begin{enumerate}
\item $\deg f = d$, and
\item none of the roots of $f$ in $\PCv$ lie in the same component
  of $\PBerk\smallsetminus\{\xi\}$ as $a$.
\end{enumerate}
\end{lemma}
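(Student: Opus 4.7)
The plan is to build $f$ around the unique attracting fixed point $\alpha\in U$ provided by Lemma~\ref{lem:juanclass}, where $U$ denotes the attracting fixed component containing $a$. I would first argue that $\alpha\in\PK$. The component $U$ is stable under $\Gal(\Kbar/K)$ because it contains the $K$-rational point $a$, so Galois permutes the fixed points of $\phi$ lying in $U$; since $\alpha$ is the unique such fixed point, it must be Galois-invariant. Writing $\phi=P/Q$ in lowest terms with $P,Q\in K[z]$ and $\max\{\deg P,\deg Q\}=d$, the polynomial $F(z):=P(z)-zQ(z)\in K[z]$ has the finite fixed points of $\phi$ as its roots. Being attracting means $|\phi'(\alpha)|<1$, so in particular $\phi'(\alpha)\ne 1$; hence $\alpha$ is a simple root of $F$, therefore separable over $K$, and Galois invariance then forces $\alpha\in K$ (trivially if $\alpha=\infty$).

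Then I would construct $f$ by case analysis, exploiting the fact that $\phi(U)\subseteq U$ implies every $\phi$-preimage of a point outside $U$ is outside $U$. If $\phi(\infty)\ne\infty$, then all $d+1$ fixed points of $\phi$ are finite and $\deg F=d+1$; set $f:=F/(z-\alpha)\in K[z]$, which has degree $d$ and whose roots are the fixed points other than $\alpha$, all outside $U$ by Lemma~\ref{lem:juanclass}. If $\alpha=\infty$, then $\infty$ is a simple attracting fixed point so $\deg F=d$, and $f:=F$ has its $d$ finite roots outside $U$. In the remaining case $\phi(\infty)=\infty$ with $\alpha\ne\infty$, the point $\infty\ne\alpha$ is a fixed point lying outside $U$, and $U\cap\PCv=D(\alpha,s)$ for some $s>0$. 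Using the non-trivial valuation of $K$, pick $y\in K$ with $|y-\alpha|\ge s$, so that $y\notin U$ and $y\ne\infty=\phi(\infty)$; then $f(z):=P(z)-yQ(z)\in K[z]$ has degree $d$, and its roots $\phi^{-1}(y)$ all lie in $\PCv\setminus U$.

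The main obstacle is this last case, which requires producing a $K$-rational point outside the disk $D(\alpha,s)$; this is routine when the valuation on $K$ is non-trivial (so $K^{\times}$ contains elements of arbitrarily large absolute value), and the trivially valued case is degenerate since every $\phi\in K(z)$ then has good reduction. The remaining work is bookkeeping: the Galois-separability argument for $\alpha\in K$, the verification that $\deg F$ equals $d+1$ or $d$ in the respective cases, and checking that the simple-root structure at $\alpha$ delivers the correct degree for the quotient $F/(z-\alpha)$.
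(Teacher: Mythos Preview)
Your Galois argument for $\alpha\in\PK$ has a genuine gap. The paper does not assume that $K$ is complete or even Henselian, so the chosen absolute value $|\cdot|$ on $\Kbar$ need not be the \emph{unique} extension of $|\cdot|\big|_K$; in general $\Gal(\Kbar/K)$ permutes the various extensions rather than preserving the given one. Consequently $|\sigma(x)|$ need not equal $|x|$ for $\sigma\in\Gal(\Kbar/K)$, the disk structure on $\Kbar$ is not Galois-stable, and your claim that ``$U$ is stable under $\Gal(\Kbar/K)$ because it contains the $K$-rational point $a$'' fails. (Take $K=\QQ$ with a $p$-adic absolute value: the full group $\Gal(\Qbar/\QQ)$ certainly does not fix a single $p$-adic valuation on $\Qbar$.) The paper flags exactly this obstruction in the paragraph immediately preceding the lemma: when $K$ is not complete, the attracting fixed point in $U$ need not be $K$-rational, and the lemma is stated precisely to work around that.

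The paper's proof sidesteps the problem by using the genuinely $K$-rational iterates $\phi^n(a)$, which converge in $\Cv$ to the attracting fixed point $b\in U$. After a $K$-rational change moving $\infty$ off the fixed locus (so $\deg F=d+1$), it forms the perturbation $G_n(z):=F(z)-F(\phi^n(a))\in K[z]$; for $n$ large, continuity of roots forces $G_n$ to have $\phi^n(a)$ as its unique root near $b$ and its remaining $d$ roots near the other fixed points $b_1,\dots,b_d$, hence outside $U$. Then $f(z):=G_n(z)/(z-\phi^n(a))\in K[z]$ has degree $d$ and no roots in $U$. Your argument becomes correct once $\alpha$ is replaced by such a $K$-rational approximation $\phi^n(a)$ and $F$ by the perturbed $G_n$; the separate case analysis on $\infty$ is then unnecessary. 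If you are willing to add the hypothesis that $K$ is Henselian (so that $\Gal(\Kbar/K)$ does preserve $|\cdot|$), your original proof is valid as written.
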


\begin{proof}
After a $K$-rational change of coordinates if necessary, we may assume that
$\infty$ is \emph{not} one of the $d+1$ fixed points of $\phi$.
Let $U$ be the component of $\PBerk\smallsetminus\{\xi\}$
containing $a$.

Write $\phi=g/h$ with $g,h\in K[z]$ in lowest terms.  Then the
fixed points of $\phi$ in $\PCv$ are precisely the roots of
the polynomial
$$F(z) = h(z)\big(\phi(z) -z\big) = g(z) - zh(z)\in K[z].$$
Because $\infty$ is not fixed, we have $\deg F = d+1$.
By Lemma~\ref{lem:juanclass}.b, there is an attracting fixed point
$b\in U\cap\Kbar$ such that $\lim_{n\to\infty}\phi^n(a)=b$.
Denote the roots of $F$ by $b=b_0,b_1,\ldots,b_d\in\Kbar$.

Choose $\eps>0$ small enough that for each $j=0,\ldots,d$,
the disk $D(b_j,\eps)$ 
is contained in a single component of $\PBerk\smallsetminus\{\xi\}$.
For each $n\geq 0$, define
$$G_n(z):=F(z) - F\big(\phi^n(a)\big)\in K[z],$$
which is a slight perturbation of $F$.
By Proposition~3.4.1.1 of~\cite{BGR},
for $n$ large enough, and hence for $\phi^n(a)$ close enough to $b$,
$G_n$ has exactly one root, namely $\phi^n(a)$,
in $D(b_0,\eps)$, and its other $d$ roots in
$\bigcup_{j=1}^d D(b_j,\eps)$.  Thus, the polynomial
$$f(z):= \frac{G_n(z)}{z - \phi^n(a)} \in K[z]$$
has degree $d$, with none of its roots lying in $U$.
\end{proof}

%

\begin{lemma}
\label{lem:pextn}
Let $K$ be a non-archimedean field of residue characteristic $p\geq 0$,
and let $f\in K[z]$ be a polynomial of degree $n\geq 1$.
Let $V\subseteq \Cv$ be a closed disk
that contains all the roots of $f$, and
let $e=v_p(n)\geq 0$.  Then
there is a point $\alpha\in V$ such that $[K(\alpha):K]\leq p^e$.
\end{lemma}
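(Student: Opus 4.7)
The plan is to argue by strong induction on $n=\deg f$, with the base case $n=1$ being trivial (the unique root of $f$ lies in $K\cap V$, and $[K(\alpha):K]=1=p^0$).

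For the inductive step, I would first handle the reducible case. If $f=gh$ in $K[z]$ with $1\leq \deg g,\deg h<n$, then $\deg g+\deg h=n$, so from $v_p(n)=e$ we cannot have both $v_p(\deg g)>e$ and $v_p(\deg h)>e$ (that would force $v_p(\deg g+\deg h)>e$). Relabeling, I may assume $v_p(\deg g)\leq e$. Every root of $g$ is a root of $f$ and thus lies in $V$, so the inductive hypothesis applied to $g$ yields $\alpha\in V$ with $[K(\alpha):K]\leq p^{v_p(\deg g)}\leq p^e$.

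It remains to treat the case that $f$ is irreducible of degree $n$. Here the natural candidate is the centroid $\tilde\alpha=(\alpha_1+\cdots+\alpha_n)/n=-c_{n-1}/(nc_n)\in K$, which by the ultrametric inequality satisfies
$$|\tilde\alpha-a|=\Bigl|\tfrac{1}{n}\sum_{i=1}^{n}(\alpha_i-a)\Bigr|\leq \frac{r}{|n|}$$
when $V=\Dbar(a,r)$. When $p\nmid n$, i.e.\ $e=0$, we have $|n|=1$, so $\tilde\alpha\in V$ and $[K(\tilde\alpha):K]=1=p^0$, completing this subcase.

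The main obstacle is the remaining subcase: $f$ irreducible and $p\mid n$ (so $e\geq 1$), where the centroid may land in the enlarged disk $\Dbar(a,r/|n|)$ but outside $V$. I expect to handle this by combining the centroid idea with a clever construction: after substituting $z\mapsto z+\tilde\alpha$ we obtain a depressed polynomial $F(z)=f(z+\tilde\alpha)\in K[z]$ whose roots $\beta_i=\alpha_i-\tilde\alpha$ lie in $\Dbar(b,r)$ for $b=a-\tilde\alpha$, with $\sum\beta_i=0$. Provided $|b|>r$ (otherwise $\tilde\alpha\in V$ and we are done), all $|\beta_i|=|b|$, and one may rescale by a suitable $\pi$ and take the Hensel-lift of a factor of the reduction to locate a point of small degree inside the translated disk. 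Either this furnishes $\alpha\in V$ of degree $\leq p^e$ directly, or it exhibits a polynomial of strictly smaller degree $d<n$ with $v_p(d)\leq e$ and all roots in $V$, to which the inductive hypothesis applies. The delicate part — and where I expect the bulk of the work to lie — is verifying that the element produced by this construction actually lies in $V$ (not merely in the slightly larger disk $\Dbar(a,r/|n|)$), which is what forces the bound $p^e$ rather than something coarser.
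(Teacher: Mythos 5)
Your proposal handles the easy cases correctly, but the heart of the lemma — the case $p\mid n$, where $e\geq 1$ — is left as an acknowledged sketch, and the sketch does not close the gap. Let me be concrete about where the difficulty lies and what the missing idea is.

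Your base case, reducible-case reduction, and the centroid argument for $p\nmid n$ are all sound. In fact the centroid is exactly the right idea in disguise: if $f(z)=z^n+c_{n-1}z^{n-1}+\cdots$ is monic, then the $z^{n-1}$-coefficient of $f(z+t)$ is $nt+c_{n-1}$, a degree-$1$ polynomial in $t$ with unit leading coefficient (since $|n|=1$), and its root is your $\tilde\alpha$. The completion of the argument for general $e$ is the natural generalization you did not make: set $q=p^e$ and look at the $z^{n-q}$-coefficient of $f(z+t)$ as a polynomial $g(t)$. This $g$ has degree exactly $q$ in $t$, with leading coefficient $\binom{n}{q}$, which is a $p$-adic unit (a standard consequence of the formula for $v_p(n!)$). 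A root $\alpha$ of $g$ therefore satisfies $[K(\alpha):K]\leq q=p^e$, and one then shows $\alpha\in V$ by contradiction: if not, then after translating by $\alpha$, all roots $\beta_1,\dots,\beta_n$ of $f(z+\alpha)$ satisfy $|\beta_j-\beta_1|<|\beta_1|$, so the $q$-th elementary symmetric function of the $\beta_j$ has absolute value exactly $|\binom{n}{q}|\,|\beta_1|^q=|\beta_1|^q\neq 0$; but that symmetric function is (up to sign) the $z^{n-q}$-coefficient of $f(z+\alpha)$, which is $g(\alpha)=0$. Contradiction. Note this argument needs no reducible/irreducible split and no induction at all.

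By contrast, your plan for the hard case — translate by the centroid, rescale, reduce mod $\maxid$, and Hensel-lift a factor of the reduction — has several unresolved problems. Rescaling to radius $1$ may require passing to an extension whose degree you have not controlled; the reduction of the rescaled polynomial may be a pure power (e.g.\ $\bar z^n$ or a power of an irreducible) with no proper factorization to lift; and even when a factor exists, Hensel lifting produces a factor over $K$, whose roots need not lie in the \emph{original} disk $V$ but only in the enlarged disk $\Dbar(a,r/|n|)$, which is exactly the slack you identified as fatal. Nothing in the sketch pins the degree bound to $p^e$ rather than something coarser like $|n|^{-1}$ in the value group. So while your instinct to exploit the vanishing of a symmetric function after translation is the right one, you have applied it only to the first elementary symmetric function; the lemma requires applying it to the $q$-th.
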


When $p=0$, recall our convention that $v_p(n)=0$ and $p^0=1$.

\begin{proof}
Without loss, $f(z)=z^n + a_{n-1}z^{n-1} + \cdots + a_0$ is monic.
Let $q=p^e$, and
let $g(t)\in K[t]$ be the coefficient of $z^{n-q}$ in $f(z+t)\in K[t,z]$,
that is,
$$g(t) = \binom{n}{q} t^{q} + \sum_{j=0}^{q - 1}
a_{n-q + j} \binom{n-q + j}{n-q} t^j.$$
It is well known that $p\nmid \binom{n}{q}$
(this fact follows easily from the formula for $\ord_p(n!)$
found in, for example, Section V.3.1 of \cite{Rob}),
and hence that $|\binom{n}{q}|=1$ in $K$.

Let $\alpha\in\overline{K}$ be a root of $g$,
so that $[K(\alpha):K]\leq \deg g = q$.
Let $h(z):=f(z+\alpha)\in K(\alpha)[z]$,
and let $W:=V-\alpha\subseteq\Cv$,
which is a closed disk containing all the roots of $h$.
It suffices to show that $0\in W$.

We proceed by contradiction.
If $0\not\in W$, then denoting the roots of $h$ by
$\beta_1,\ldots,\beta_n\in\overline{K}$,
we must have $|\beta_j - \beta_1|<|\beta_1|$ for all $j=1,\ldots, n$.
Therefore, for any $1\leq j_1 < \ldots < j_q \leq n$,
\begin{equation}
\label{eq:betabound}
\big| \beta_{j_1} \cdots \beta_{j_q} - \beta_1^q \big|
< |\beta_1|^q.
\end{equation}
By construction, the $z^{n-q}$-coefficient of $h(z)$
is $0$. Thus, examining the same coefficient
when writing $h(z)=\prod_{j=1}^n (z-\beta_j)$, we have
\begin{equation}
\label{eq:zerocontra}
0 = (-1)^q \sum_{1\leq j_1 < \ldots < j_{q} \leq n}
\beta_{j_1}\cdots \beta_{j_{q}}
= (-1)^q \binom{n}{q} \beta_1^{q} + \eps,
\end{equation}
where $|\eps| < |\beta_1|^{q}$, by inequality~\eqref{eq:betabound}.
Since $|\binom{n}{q}|=1$, then, the right side of
equation~\eqref{eq:zerocontra} cannot be zero,
giving a contradiction.
\end{proof}

\begin{lemma}
\label{lem:diskscale}
Let $K$ be a non-archimedean field,
let $\phi\in K(z)$, and let $r>0$.
Suppose that $\phi$ maps $D(0,r)$ $\ell$-to-$1$ onto itself,
for some integer $\ell\geq 2$.
Then $r^{\ell-1}\in |K^{\times}|$.
\end{lemma}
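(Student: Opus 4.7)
The plan is to expand $\phi$ as a power series centered at $0$ and read $r^{\ell-1}$ off a single coefficient that we will show lies in $K$. The hypothesis that $\phi(D(0,r)) = D(0,r)$ means in particular that $\phi$ has no poles on $D(0,r)$, so if we write $\phi = f/g$ in lowest terms with $f,g\in K[z]$ we have $g(0)\neq 0$. Inverting $g$ inside the power series ring $K[[z]]$ and multiplying by $f$ then yields an expansion
$$\phi(z) = \sum_{n\geq 0} c_n z^n, \qquad c_n\in K,$$
which converges on $D(0,r)$. Everything to come will hinge on the coefficient $c_\ell$ of this \emph{$K$-rational} expansion, not merely a $\Cv$-rational one.

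Next I would identify the image disk explicitly. Writing $\phi(z)-c_0 = \sum_{n\geq 1}c_nz^n$, the standard non-archimedean estimate gives $\phi(D(0,r)) = D(c_0,s)$ with
$$s = \max_{n\geq 1}|c_n|r^n.$$
Since by hypothesis this image disk equals $D(0,r)$, and in particular contains $0$, we must have $|c_0|<r$ and $s=r$. So $\max_{n\geq 1}|c_n|r^n = r$. Now invoke the recollection from Section~\ref{sec:background}: whenever the image disk contains $0$, the number of preimages with multiplicity is exactly the Weierstrass degree of $\phi$ on $D(0,r)$. Thus the integer $\ell\geq 2$ in the statement is precisely the Weierstrass degree, so
$$|c_\ell|r^\ell = r \qquad \text{and} \qquad |c_n|r^n < r \text{ for } n<\ell.$$

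In particular $c_\ell\neq 0$, and since $c_\ell\in K$ we conclude
$$r^{\ell-1} = |c_\ell|^{-1} \in |K^\times|,$$
which is the desired statement. There is no real obstacle here; the only point that deserves a moment of care is insisting that the power series coefficients live in $K$ rather than in $\Cv$, which is the only way we actually get membership in $|K^\times|$ rather than in the possibly larger group $|\Cv^\times|$. That rationality is immediate from $\phi\in K(z)$ together with the absence of a pole at $0$, so the lemma reduces to a direct reading of the Weierstrass-degree formalism already recalled in the background section.
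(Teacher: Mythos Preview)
Your proof is correct and follows essentially the same route as the paper's own argument: expand $\phi$ as a power series in $K[[z]]$, identify $\ell$ as the Weierstrass degree on $D(0,r)$, read off $|c_\ell|r^\ell = r$, and conclude $r^{\ell-1}=|c_\ell|^{-1}\in|K^\times|$. The only difference is that you spell out more of the justification (absence of poles, $K$-rationality of the coefficients, the image-radius formula), whereas the paper states the conclusion in three lines.
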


\begin{proof}
Expand $\phi(z)=\sum_{n\geq 0} c_n z^n\in K[[z]]$ as a power series
converging on $D(0,r)$.  By hypothesis, the Weierstrass degree of
$\phi$ on $D(0,r)$ is $\ell$, and $\phi(D(0,r))=D(0,r)$.
In particular,
$$|c_\ell|r^\ell = r.$$
As a result, $c_{\ell}$ cannot be zero,
and $r^{\ell - 1} = |c_{\ell}|^{-1}\in |K^{\times}|$.
\end{proof}

\section{Proof of Theorem A}
\label{sec:proofA}

Theorem A rests on the following four results on the existence of
field extensions $L/K$ large enough to define new components of
$\PBerk\smallsetminus\{\xi\}$, where $\xi$ is the totally invariant
type~II point for a given map $\phi$ of potentially good reduction.
The basic idea underlying these results is twofold.  First,
an indifferent fixed component $U$ may contain multiple fixed points,
but for any $x$ in $U$, only one of the $d$ preimages of $x$ lies in $U$.
Second, an attracting fixed component $U$ may contain many, or even
all, of the preimages of a given point $x\in U$, but only one of the
$d+1$ fixed points.

\begin{thm}
\label{thm:indiffext}
Let $K$ be a non-archimedean field of residue characteristic $p\geq 0$,
and let $\phi\in K(z)$ be a rational function
of degree $d\geq 2$ with a totally invariant 
type~II point $\xi\in\PBerk$.
Let $e=v_p(d-1)\geq 0$, and
let $U$ be an indifferent fixed component of
$\PBerk\smallsetminus\{\xi\}$
such that $U\cap\PK\neq\varnothing$.
Then there is a point $\alpha\in\PKbar\smallsetminus U$
such that $[K(\alpha):K]\leq p^e$.
\end{thm}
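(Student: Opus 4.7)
The plan is to extract a polynomial of degree $d-1$ over $K$ whose roots, after a $K$-rational M\"obius inversion about a $K$-rational preimage of a chosen point in $U$, lie in a closed disk of $\Cv$; Lemma~\ref{lem:pextn} will then supply the desired $\alpha$.

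I will first pick a convenient $x_0\in U\cap K$. Because $U$ contains a $K$-rational point, it is Galois-stable; and because $K$ is infinite (the trivially valued case makes the theorem vacuous), $U\cap K$ is infinite, so I can choose $x_0\in U\cap K$ with $x_0\neq\infty$ and $x_0\neq\phi(\infty)$. By Lemma~\ref{lem:juanclass}(a), $\phi|_U$ is one-to-one, so $x_0$ has a unique preimage $\beta\in U$; uniqueness combined with Galois-stability of $U$ puts $\beta\in K$, and injectivity of $\phi|_U$ forces $\beta$ to be a simple (non-critical) preimage. Writing $\phi=g/h$ in lowest terms with $g,h\in K[z]$, the polynomial $F(z):=g(z)-x_0 h(z)\in K[z]$ has degree $d$ (since $x_0\neq\phi(\infty)$) and factors as $F(z)=(z-\beta)\tilde F(z)$ with $\tilde F\in K[z]$ of degree $d-1$ and $\tilde F(\beta)\neq 0$; its $d-1$ roots are precisely the preimages of $x_0$ that lie outside $U$.

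I will then pass to the $K$-rational M\"obius inversion $\tau(z):=1/(z-\beta)\in\PGL(2,K)$ and set
\[
G(w) := w^{d-1}\tilde F\!\left(\beta+\tfrac{1}{w}\right)\in K[w].
\]
Its leading coefficient is $\tilde F(\beta)\neq 0$, so $\deg G=d-1$, and its roots are the $\tau$-images of the roots of $\tilde F$. Because $\beta\in U$, the map $\tau$ sends $\PCv\smallsetminus U$ bijectively onto a closed disk $V\subseteq\Cv$: concretely, $V=\Dbar(0,1/r)$ when $U$ is the bounded disk $D(\beta,r)$, and $V$ is a closed disk not containing the origin when $U$ is the unbounded component of $\PBerk\smallsetminus\{\xi\}$. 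In particular, all roots of $G$ lie in $V$, and Lemma~\ref{lem:pextn} applied to $G$ and $V$ yields $w_0\in V$ with $[K(w_0):K]\leq p^{v_p(d-1)}=p^e$. Setting $\alpha:=\tau^{-1}(w_0)$ (which equals $\beta+1/w_0$ if $w_0\neq 0$ and equals $\infty$ if $w_0=0$), I obtain $\alpha\in\PKbar$ with $[K(\alpha):K]=[K(w_0):K]\leq p^e$ and $\alpha\in\tau^{-1}(V)=\PCv\smallsetminus U$, so $\alpha\notin U$.

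The hardest step, in my estimation, is the mismatch between Lemma~\ref{lem:pextn} (which requires its auxiliary polynomial's roots to lie in a closed disk of $\Cv$) and the natural home of the $d-1$ preimages of $x_0$ outside $U$ (namely $\PCv\smallsetminus U$, which is generally not a disk of $\Cv$). The $K$-rational inversion $\tau$ about the preimage $\beta\in K\cap U$ is precisely what repairs this mismatch, and this inversion is the source of the exponent $v_p(d-1)$ in the bound.
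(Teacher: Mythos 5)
Your proof is correct and rests on the same core idea as the paper's: the $d$ preimages (counted with multiplicity) of a $K$-rational base point of $U$ include exactly one in $U$ by injectivity of $\phi|_U$, so the remaining $d-1$ are roots of a degree-$(d-1)$ polynomial over $K$ lying in a closed disk after a $K$-rational M\"obius change of variable, and Lemma~\ref{lem:pextn} finishes. The paper does the coordinate change first, putting $\infty\in U$ so that $V=\PCv\smallsetminus U$ is already a closed disk, and takes $x_0=\phi(\infty)$ with $U$-preimage $\infty\in\PK$, making rationality automatic; your variant with an interior $x_0$ and preimage $\beta$ works, but the assertion that Galois-stability of $U$ alone puts $\beta\in K$ is incomplete when $K$ is imperfect (it only places $\beta$ in the purely inseparable closure of $K$), and you should also invoke the simplicity of $\beta$ as a root of $F$---a fact you do record, but for a different purpose---to exclude $[K(\beta):K]>1$.
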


As before, when $p=0$, recall our convention that $v_p(n)=0$ and $p^0=1$.

\begin{proof}
After a $K$-rational change of coordinates, we may assume that
$\infty\in U$.  Thus, $V:=\PCv\smallsetminus U$ is a closed disk
contained in $\Cv$.

Counting multiplicity,
there are exactly $d$ preimages of $\phi(\infty)$ in $\PCv$.
However, because $\infty$ is one of them, and because
$\phi:U\to U$ is one-to-one by Lemma~\ref{lem:juanclass},
there must be $d-1$ such preimages in $V$,
and they must be the roots of some polynomial
$f(z)\in K[z]$ of degree $d-1$.
By Lemma~\ref{lem:pextn}, then, there exists
a point $\alpha\in V$
such that $[K(\alpha):K]\leq p^e$.
\end{proof}

\begin{thm}
\label{thm:attrext}
Let $K$ be a non-archimedean field of residue characteristic $p\geq 0$,
and let $\phi\in K(z)$ be a rational function
of degree $d\geq 2$ with a totally invariant 
type~II point $\xi\in\PBerk$.
Let $e=v_p(d)\geq 0$, and
let $U$ be an attracting fixed component of
$\PBerk\smallsetminus\{\xi\}$
such that $U\cap\PK\neq\varnothing$.
Then there is a point $\alpha\in\PKbar\smallsetminus U$
such that $[K(\alpha):K]\leq p^e$.
\end{thm}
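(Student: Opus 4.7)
The plan is to mirror the proof of Theorem~\ref{thm:indiffext}, but exploit the fact that an attracting component $U$ contains exactly one of the $d+1$ fixed points of $\phi$ rather than exactly one of the $d$ preimages of a moving target. Lemma~\ref{lem:attrapprox} has been custom-built for this: it converts the degree-$(d+1)$ fixed-point polynomial $F(z)=g(z)-zh(z)$ into a nearby polynomial of degree exactly $d$ whose roots all avoid $U$. Once we have such a polynomial, Lemma~\ref{lem:pextn} immediately produces a low-degree approximate root.

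First I would make a $K$-rational change of coordinates so that $\infty \in U$. This is possible because $U\cap \PK\neq\varnothing$: choose any $a_0 \in U\cap\PK$ and conjugate by a $K$-rational Möbius transformation sending $a_0$ to $\infty$. The new map $\psi=h\circ\phi\circ h^{-1}$ still has degree $d$, the point $h(\xi)$ is still totally invariant and type~II, and $h(U)$ is still an attracting fixed component meeting $\PK$ (it contains $\infty$ now). Since conjugation by an element of $\PGL(2,K)$ preserves degrees of extensions over $K$, finding $\alpha\in\PKbar\setminus h(U)$ of the correct degree yields the statement for $\phi$ after applying $h^{-1}$.

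Having arranged $\infty\in U$, I would pick any $a\in U\cap\PK$ (for instance $a=\infty$) and invoke Lemma~\ref{lem:attrapprox} to produce a polynomial $f\in K[z]$ of degree $d$ such that none of the roots of $f$ in $\PCv$ lie in $U$. Since $\xi$ is a type~II point with $\infty\in U$, the intersection $V:=\PCv\setminus U$ is a closed disk in $\Cv$ (it is $\Dbar(b,r)$ if one writes $\xi=\zeta(b,r)$), and every root of $f$ lies in $V$.

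Finally, I would apply Lemma~\ref{lem:pextn} with $n=d$ and $e=v_p(d)$ to the polynomial $f$ and the disk $V$, yielding $\alpha\in V$ with $[K(\alpha):K]\leq p^{e}$. Automatically $\alpha\in\Kbar$, and $\alpha\in V=\PCv\setminus U$ so $\alpha\in\PKbar\setminus U$, as required. The only real obstacle has already been absorbed into Lemma~\ref{lem:attrapprox}: passing from the natural fixed-point polynomial of degree $d+1$ (which would give the wrong bound $v_p(d+1)$) to a degree-$d$ polynomial with roots still avoiding $U$. Once that step is granted, the remainder of the argument is a clean pairing of Lemmas~\ref{lem:attrapprox} and~\ref{lem:pextn}.
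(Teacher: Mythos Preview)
Your proposal is correct and follows essentially the same approach as the paper's own proof: after a $K$-rational coordinate change placing $\infty$ in $U$, the paper also applies Lemma~\ref{lem:attrapprox} (with $a=\infty$) to obtain a degree-$d$ polynomial $f\in K[z]$ with all roots in the closed disk $V=\PCv\smallsetminus U$, and then invokes Lemma~\ref{lem:pextn} to produce $\alpha\in V$ with $[K(\alpha):K]\leq p^e$.
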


\begin{proof}
After a $K$-rational change of coordinates, we may assume that
$\infty\in U$.  Thus, $V:=\PCv\smallsetminus U$ is a closed disk
contained in $\Cv$.

According to Lemma~\ref{lem:attrapprox}, with $a=\infty$,
there is a polynomial $f\in K[z]$ of degree $d$ and with all its
roots in the closed disk $V$.
By Lemma~\ref{lem:pextn}, then, there exists
a point $\alpha\in V$ such that $[K(\alpha):K]\leq p^e$.
\end{proof}

\begin{thm}
\label{thm:twocomps1}
Let $K$ be a non-archimedean field,
and let $\phi\in K(z)$ be a rational function
of degree $d\geq 2$ with a totally invariant 
type~II point $\xi\in\PBerk$.
Let $U$ and $V$ be distinct components of 
$\PBerk\smallsetminus\{\xi\}$
that both contain points of $\PK$
and such that $U$ is fixed, while $\phi(V)\subseteq U$.
Then there is a point $\alpha\in\PKbar\smallsetminus (U\cup V)$
such that $[K(\alpha):K]\leq d$.
\end{thm}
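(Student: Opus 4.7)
The plan is to exploit an asymmetry hidden in the hypotheses: both $U$ and $V$ are mapped \emph{into} $U$ by $\phi$, so no point of $U \cup V$ can be a $\phi$-preimage of any point of $V$. I would pick any $w \in V \cap \PK$ (nonempty by hypothesis), take any $\alpha \in \phi^{-1}(w)$, and then verify that $\alpha \in \PKbar \setminus (U \cup V)$ and $[K(\alpha):K] \leq d$.

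For the location of $\alpha$: Lemma~\ref{lem:juanclass} upgrades the hypothesis that $U$ is fixed to the equality $\phi(U) = U$, so every point of $U$ maps into $U$; since $U$ and $V$ are disjoint (being distinct components of $\PBerk \smallsetminus \{\xi\}$), no point of $U$ can map to $w \in V$. Likewise, the hypothesis $\phi(V) \subseteq U$ forbids preimages of $w$ from lying in $V$. Hence $\phi^{-1}(w) \subseteq \PCv \smallsetminus (U \cup V)$, and this fiber is nonempty since $\phi$ is surjective on $\PCv$.

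For the degree bound, write $\phi = f/g$ in lowest terms with $f, g \in K[z]$ and $\max\{\deg f, \deg g\} = d$. If $\alpha = \infty$ then $\alpha \in \PK$ and $[K(\alpha):K] = 1 \leq d$; otherwise $\alpha$ is a root of $f(z) - w\, g(z) \in K[z]$, which is nonzero (else $\phi$ would be the constant map $w$, contradicting $d \geq 2$) and of degree at most $d$, so $[K(\alpha):K] \leq d$. No step is really an obstacle here: the argument is essentially bookkeeping once one recognizes that the correct fiber to inspect is one over a point of $V$, in contrast to Theorems~\ref{thm:indiffext} and~\ref{thm:attrext}, where the relevant fiber was over a point of $U$.
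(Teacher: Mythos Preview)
Your proof is correct and essentially the same as the paper's: both examine the fiber $\phi^{-1}(w)$ over a $K$-rational point $w\in V$ and use $\phi(U\cup V)\subseteq U$ to place that fiber outside $U\cup V$. The paper first makes a $K$-rational coordinate change so that $0\in U$ and $w=\infty\in V$, which turns the fiber into the zero set of the denominator $g$ (of degree exactly $d$, since $\infty\in V$ cannot be a pole) and thereby avoids your case-split on $\alpha=\infty$ and the tacit assumption $w\neq\infty$.
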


\begin{proof}
After a $K$-rational change of coordinates, we may assume
that $0\in U$ and $\infty\in V$.  There are $d$ preimages of $\infty$
in $\PCv$, none of which can be in $U\cup V$, since
$\phi(U\cup V)\subseteq U$, whereas $\infty\not\in U$.
Thus, there is a polynomial $g(z)\in K[z]$ of degree $d$
with all of its roots in $\PCv\smallsetminus (U\cup V)$;
$g$ is, of course, simply the denominator of $\phi$.
Choosing $\alpha$ to be one of these roots, we have
$[K(\alpha):K]\leq\deg g = d$.
\end{proof}

\begin{thm}
\label{thm:twocomps2}
Let $K$ be a non-archimedean field,
and let $\phi\in K(z)$ be a rational function
of degree $d\geq 2$ with a totally invariant 
type~II point $\xi\in\PBerk$.
Let $U$ and $V$ be distinct components of 
$\PBerk\smallsetminus\{\xi\}$
that both contain points of $\PK$.  If either or both
of the following two conditions hold:
\begin{enumerate}
\item $U$ and $V$ are both fixed, or
\item $U$ is attracting fixed,
\end{enumerate}
then there is a point $\alpha\in\PKbar\smallsetminus (U\cup V)$
such that $[K(\alpha):K]\leq d-1$.
\end{thm}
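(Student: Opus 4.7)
The plan is to begin, as in the proof of Theorem~\ref{thm:twocomps1}, with a $K$-rational change of coordinates placing $\infty \in U \cap \PK$ and $0 \in V \cap \PK$; then $\xi = \zeta(0, s)$ for some $s > 0$, with $U \cap \Cv = \{z : |z| > s\}$ and $V \cap \Cv = \{z : |z| < s\}$. Write $\phi = g/h$ with coprime $g, h \in K[z]$ of maximum degree $d$. The theorem's hypothesis splits cleanly into two scenarios: either at least one of $U, V$ is attracting fixed, or (necessarily under hypothesis~(a)) both are indifferent fixed.

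In the first scenario, suppose $U$ is attracting fixed, so that by Lemma~\ref{lem:juanclass} the restriction $\phi|_U$ is $k_U$-to-$1$ for some $k_U \geq 2$. In the $K$-rational coordinate $w = 1/z$, the component $U$ corresponds to $D(0, 1/s)$ and $\phi$ corresponds to a map $\psi \in K(w)$ sending $D(0, 1/s)$ onto itself $k_U$-to-$1$; Lemma~\ref{lem:diskscale} then yields $(1/s)^{k_U - 1} \in |K^*|$, equivalently $s^{k_U - 1} \in |K^*|$. Choose $\beta \in K^*$ with $|\beta| = s^{k_U - 1}$ and let $\alpha \in \Kbar$ be a root of $z^{k_U - 1} - \beta$. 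Then $|\alpha| = s$, so $\alpha \in \PKbar \setminus (U \cup V)$, and $[K(\alpha):K] \leq k_U - 1 \leq d - 1$. If instead $V$ is attracting fixed (with $U$ indifferent, under hypothesis~(a)), apply Lemma~\ref{lem:diskscale} directly to $\phi|_V$ on $V = D(0, s)$ to obtain $s^{\ell_V - 1} \in |K^*|$, and construct $\alpha$ analogously.

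In the remaining scenario, $U$ and $V$ are both indifferent fixed, so $\phi|_U$ and $\phi|_V$ are each $1$-to-$1$. Set $c = \phi(0) \in V \cap K$ and $P = g - c h \in K[z]$. Since $\phi(\infty) \in U$ is distinct from $c \in V$, the leading terms of $g$ and $c h$ do not cancel and $\deg P = d$; moreover $P(0) = h(0)\bigl(\phi(0) - c\bigr) = 0$, so $z \mid P$ and $P/z \in K[z]$ has degree $d - 1$. The roots of $P/z$ are the $d - 1$ finite preimages of $c$ other than $0$: none lies in $U$ (since $\phi(U) \subseteq U \not\ni c$), and $0$ is the unique preimage of $c$ in $V$ (since $\phi|_V$ is $1$-to-$1$). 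Any such root $\alpha$ therefore lies in $\PCv \setminus (U \cup V)$ with $[K(\alpha):K] \leq d - 1$. The most delicate point is the first scenario under hypothesis~(b) with $V$ not fixed: a direct preimage-polynomial argument for $\phi(0) \in U$ can fail in the edge case $\phi(V) = U$ with $k_U + \ell_V = d$, since then all preimages of $\phi(0)$ lie in $U \cup V$; Lemma~\ref{lem:diskscale} sidesteps this by fixing $|\alpha| = s$ directly from the multiplicative structure of $|K^*|$.
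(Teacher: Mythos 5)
Your proof is correct and uses the same key tools as the paper: Lemma~\ref{lem:diskscale} for the attracting case, and a degree-$(d-1)$ preimage polynomial for the indifferent case. The only real difference is your coordinate convention---you place $\infty\in U$ and $0\in V$, which makes your preimage-polynomial argument need $\phi|_V$ injective (forcing you to split off a separate sub-case when $V$ is attracting), whereas the paper places $0\in U$ and $\infty\in V$ so that its Case~2 needs only that $V$ is fixed, handling $V$ attracting or indifferent uniformly.
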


\begin{proof}
Under either condition, $U$ is fixed.
After a $K$-rational change of coordinates, we may assume
that $0\in U$ and $\infty\in V$.
We consider two cases.

\textbf{Case 1}: $U$ is attracting.  Since $0\in U$ and
$\infty\in V$, it must be that
$U\cap\PCv=D(0,r)$ and
$V\cap\PCv = \PCv\smallsetminus\Dbar(0,r)$,
for some $r>0$.
By Lemma~\ref{lem:juanclass}, 
there is an integer $\ell\geq 2$ such that $\phi:U\to U$ is
$\ell$-to-$1$.  Because $\deg\phi=d$, we must have $\ell\leq d$.
By Lemma~\ref{lem:diskscale}, there is some $c\in K^{\times}$
such that $r^{\ell -1} = |c|$.  Let $\alpha\in\Kbar$ be
a root of the polynomial $z^{\ell-1} - c = 0$,
so that $|\alpha|=r$.  Hence,
$\alpha\in\PKbar\smallsetminus (U\cup V)$,
and $[K(\alpha):K]\leq \ell-1 \leq d-1$.

\textbf{Case 2}: $U$ is not attracting.  Then by hypothesis,
$U$ and $V$ are both fixed.  In addition, by Lemma~\ref{lem:juanclass},
$U$ is indifferent, and hence $\phi:U\to U$ is one-to-one.
The equation $\phi(z)=\phi(0)$ has exactly $d$ solutions in $\PCv$,
none of which can be in $V$ (since $V$ is fixed, and $\phi(0)\in U$).
Thus, this equation becomes a polynomial equation $g(z)=0$,
where $g\in K[z]$ has degree exactly $d$.  Clearly $0$ is a root of $g$,
and hence $f(z):=g(z)/z \in K[z]$ is a polynomial of degree $d-1$.

Let $\alpha\in\Kbar$ be a root of $f$, 
so that $[K(\alpha):K]\leq \deg f = d-1$.
Note that $\alpha$ cannot lie in $U$, since $\phi:U\to U$ is injective,
and the root $z=0$ of
$\phi(z)=\phi(0)$ is already in $U$.
It also cannot lie in $V$, as noted above.
Thus, $\alpha\in\PKbar\smallsetminus(U\cup V)$, as desired.
%
\end{proof}

\begin{proof}[Proof of Theorem~A]
Assume that $\phi$ has potentially good reduction.  
By Lemma~\ref{lem:invarII}, there is a type~II point $\xi$
that is totally invariant under $\psi$.  We consider several cases.

\textbf{Case 1}:
$\PP^1(K)$ intersects at least three different components of
$\PBerk\smallsetminus\{\xi\}$.
Choose $a,b,c\in K$ in separate components, let
$h\in\PGL(2,K)$ be the unique linear fractional map
with $h(a)=0$, $h(b)=\infty$, and $h(c)=1$,
and let $\psi:=h\circ\phi\circ h^{-1}\in K(z)$.
By Lemma~\ref{lem:btmap}, $h(\xi)=\zeta(0,1)$.
Thus, $\zeta(0,1)$ must be totally invariant under $\psi$;
by Lemma~\ref{lem:invarII}, $\psi$ has good reduction.
Since $h$ is defined over $K$, and
$[K:K]=1 < B$, we are done.

\textbf{Case 2}:
$\PP^1(K)$ intersects exactly two components, $U$ and $V$, of
$\PBerk\smallsetminus\{\xi\}$, where $U$ is indifferent fixed
but $V$ is not fixed.
By continuity, $\phi(V)$ must be contained in some component,
and since $V\cap\PP^1(K)\neq\varnothing$, that component must
also contain $K$-rational points.  However,
by assumption, $\phi(V)\not\subseteq V$.
Thus, $\phi(V)\subseteq U$.

By Theorem~\ref{thm:twocomps1}, there exists an extension $L/K$
in $\Kbar$ such that $[L:K]\leq d$, and
$\PP^1(L)$ intersects at least three
different components of $\PBerk\smallsetminus\{\xi\}$.
By Case~1, then, $\phi$ is conjugate over $L$ to a map
of good reduction.  Since $d < B$, we are done.

\textbf{Case 3}:
$\PP^1(K)$ intersects exactly two components, $U$ and $V$, of
$\PBerk\smallsetminus\{\xi\}$, and \emph{either} $U$ and $V$
are both fixed, \emph{or} $U$ is attracting fixed.
By Theorem~\ref{thm:twocomps2}, there is an extension $L/K$
in $\Kbar$ such that $[L:K]\leq d-1$
and $\PP^1(L)$ intersects at least three
different components of $\PBerk\smallsetminus\{\xi\}$.
By Case~1, then, $\phi$ is conjugate over $L$ to a map
of good reduction.  Since $d-1 < B$, we are done.

\textbf{Case 4}:
$\PP^1(K)$ intersects exactly two components, $U$ and $V$, of
$\PBerk\smallsetminus\{\xi\}$, but neither is fixed.
In particular, neither can contain any fixed points in $\PKbar$.
The fixed points, however, are roots of a polynomial $f\in K[z]$
of degree $d+1$.  (After all,
we may make a $K$-rational change of coordinates
if necessary to guarantee that $\infty$ is not fixed.)
Let $\alpha$ be one of the fixed points, and let $L:=K(\alpha)$.
Then $[L:K]\leq \deg f =d+1$, and $\PP^1(L)$
intersects at least three different
components of $\PBerk\smallsetminus\{\xi\}$,
namely $U$, $V$, and the component containing $\alpha$.
By Case~1, then, $\phi$ is conjugate over $L$ to a map
of good reduction.  Since $d +1 \leq B$, we are done.

\textbf{Case 5}:
$\PP^1(K)$ intersects exactly one component $U$ of
$\PBerk\smallsetminus\{\xi\}$, and $U$ is an indifferent
fixed component.  Let $e:=v_p(d-1)$.
By Theorem~\ref{thm:indiffext}, there exists
$\alpha\in\PKbar\smallsetminus U$ such that $[K(\alpha):K]\leq p^e$.
Let $V$ be the component of 
$\PBerk\smallsetminus\{\xi\}$ containing $\alpha$.
Then with respect to the field $K(\alpha)$, we are now
in either Case~2 or Case~3 above, and hence there is
an extension $L/K(\alpha)$  with $[L:K(\alpha)]\leq d$
and such that $\phi$ is conjugate over $L$ to a map of good reduction.
Thus, $[L:K]\leq dp^e\leq B$, and we are done.

\textbf{Case 6}:
$\PP^1(K)$ intersects exactly one component $U$ of
$\PBerk\smallsetminus\{\xi\}$, but $U$ is not indifferent fixed.
Of course, $U$ must still be fixed, since
the image $\phi(U)$ must
be contained in a component but also contain $K$-rational points,
and hence $\phi(U)\subseteq U$.
By Lemma~\ref{lem:juanclass}, then, $U$ must be attracting fixed.

Let $e:=v_p(d)$.
By Theorem~\ref{thm:attrext}, there is a point
$\alpha\in\PKbar\smallsetminus U$ such that $[K(\alpha):K]\leq p^e$.
Let $V$ be the component of 
$\PBerk\smallsetminus\{\xi\}$ containing $\alpha$.
Then with respect to the field $K(\alpha)$, we are now
in Case~3 above, and hence there is
an extension $L/K(\alpha)$  with $[L:K(\alpha)]\leq d-1$
and such that $\phi$ is conjugate over $L$ to a map of good reduction.
Thus, $[L:K]\leq p^e(d-1)\leq B$, and we are done.
\end{proof}

\begin{remark}
In this Remark, we assume the reader is familiar with some
of the standard machinery of non-archimedean dynamics.

The Julia set $\calJ\subseteq\PBerk$
of a given rational function $\phi\in K(z)$
consists of precisely one point if and only if
$\phi$ has potentially good reduction;
see Th\'{e}or\`{e}me~4 of \cite{Riv3}
and Theorem~10.105 of \cite{BR}.
In that case, the one Julia point is the totally invariant type~II point of
Lemma~\ref{lem:invarII}.
The field $L$ in our Theorem~A therefore has the
property that the (one-point) Julia set of $\phi$ has a point
in the convex hull of $\PP^1(L)$. 
Consider
the following related question:
for an arbitrary rational function $\phi\in K(z)$,
can we bound the degree of an extension $L/K$ such that the Julia set
of $\phi$ has a point in the convex hull of $\PP^1(L)$?

The answer is yes, with a bound of $d=\deg\phi$, if $K$ is complete,
or $d+1$ otherwise.
The fact that this
is smaller than our earlier bound $B$ does not contradict Theorem~B, as 
the convex hull condition does not force the diameter of
the type~II Julia point to lie in $|L|$. Here is the proof.

If $\PP^1(K)$ intersects the Julia set $\calJ$ of $\phi$, then we are done
by setting $L=K$.  Thus, we may assume that $\PP^1(K)$ is contained
in the Fatou set $\calF$ of $\phi$.  If $\PP^1(K)$ intersects more than one
component of $\calF$, then choosing $a,b\in\PP^1(K)$
in different components, the unique arc in $\PBerk$ from $a$ to $b$
must intersect $\calJ$, and we are again done by setting $L=K$.
We may therefore assume that $\PP^1(K)$ is contained in a single
connected component $U$ of $\calF$.  Since $\phi(0)\in\PP^1(K)$, this
component must be fixed.

According to the Classification Theorem of \cite{Riv1},
the component $U$ is either attracting or indifferent.
If it is attracting and $K$ is complete, then the component $U$
contains a unique attracting fixed point $a=\lim_{n\to\infty}\phi^n(0)$,
which lies in $\PP^1(K)$ by our completeness assumption.
Meanwhile, $\phi$ must have $d$ other fixed points (counting multiplicity),
all of which must be distinct from $a$, since $a$ is attracting and
therefore does not have multiplier~$1$.  Let $\alpha$ be one of these
other fixed points, and let $L=K(\alpha)$,
so that $[L:K]\leq d$.  Since $\alpha$ cannot lie in $U$,
the unique arc in $\PBerk$ from $\alpha$ to $0$ must intersect $\calJ$,
and we are done.

Similarly, if $U$ is attracting and $K$ is not complete, then choosing
the same point fixed point $\alpha$ and $L=K(\alpha)$, we have
$[L:K]\leq d+1$.

Finally, if the fixed component $U$ containing $\PP^1(K)$ is indifferent,
then $\phi:U\to U$ is injective.
Let $a=\phi(0)$.  Then $\phi^{-1}(a)\smallsetminus\{0\}$ is nonempty
(since $0\in U$ cannot be a critical point),
and so we may choose $\alpha\neq 0$ such that $\phi(\alpha)=a$.
Let $L=K(\alpha)$, so that $[L:K]\leq d$.
Since $\alpha$ cannot lie in $U$ (as $\phi:U\to U$ is injective),
the unique arc in $\PBerk$ from $\alpha$ to $0$ must intersect $\calJ$,
and we are done.
\end{remark}

\section{Proof of Theorem B}
\label{sec:proofB}

We will need the following result on the field of definition
of a type~II point.

\begin{thm}
\label{thm:IIfod}
Let $K$ be a non-archimedean field,
and let $a,b\in\Kbar$ with $b\neq 0$.
Let $j,m\geq 1$ be the smallest positive integers such that
$|a|^j\in |K|$ and $|b|^m\in |K|$.
Let $L/K$ be an extension in $\Kbar$, let $h\in\PGL(2,L)$,
and suppose that $h\big(\zeta(a,|b|)\big) = \zeta(0,1)$.
\begin{enumerate}
\item
If $|a|\leq |b|$,
then the ramification index of $L/K$,
and hence also $[L:K]$, must be divisible by $m$.
\item
If $|a|>|b|$,
then the ramification index of $L/K$,
and hence also $[L:K]$, must be divisible by $\lcm(j,m)$.
\end{enumerate}
\end{thm}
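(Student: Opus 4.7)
My plan is to characterize the $\PGL(2,L)$-orbit of the Gauss point $\zeta(0,1)$ in $\PBerk$ as the set
$$S \;=\; \{\zeta(c,s) \in \PBerk : c \in L,\ s \in |L^{\times}|\}.$$
Granting this, the hypothesis $h(\zeta(a,|b|)) = \zeta(0,1)$ gives $h^{-1}(\zeta(0,1)) = \zeta(a,|b|) \in S$, so $\zeta(a,|b|) = \zeta(c,s)$ for some $c \in L$ and $s \in |L^{\times}|$. Since the radius of a type~II point is intrinsic, $|b| = s \in |L^{\times}|$; and since $\zeta(a,|b|) = \zeta(c,|b|)$ forces $|c - a| \leq |b|$, we obtain $c \in L$ with $|c - a| \leq |b|$.

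To prove the orbit equals $S$, the containment $S \subseteq \PGL(2,L) \cdot \zeta(0,1)$ is immediate via the affine maps $z \mapsto b'z + c$ with $b' \in L^{\times}$, $c \in L$. For the reverse, I would verify that each of the three standard generators of $\PGL(2,L)$---translations $z \mapsto z + \lambda$, scalings $z \mapsto \mu z$, and the inversion $\iota(z) = 1/z$---preserves $S$. Translations and scalings are immediate, sending $\zeta(c,s)$ to $\zeta(c+\lambda,s)$ and $\zeta(\mu c,\, |\mu|s)$ respectively. For $\iota$ applied to $\zeta(c,s) \in S$: if $|c| > s$ then $\iota$ is analytic and bijective on $\Dbar(c,s)$ with image $\Dbar(1/c,\,s/|c|^2) \in S$; and if $|c| \leq s$, so that $\zeta(c,s) = \zeta(0,s)$, then a direct Berkovich-tree computation gives $\iota(\zeta(0,s)) = \zeta(0,1/s) \in S$.

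With $|b| \in |L^{\times}|$ and $c \in L$ satisfying $|c - a| \leq |b|$ in hand, the divisibility conclusions follow. Since $m$ is the order of $|b|$ in $|\Kbar^{\times}|/|K^{\times}|$, it equals the order of $|b|$ in the subgroup $|L^{\times}|/|K^{\times}|$, so $m$ divides the ramification index $e(L/K) := [|L^{\times}|:|K^{\times}|]$. In case~(a) this completes the argument. In case~(b), the ultrametric inequality applied to $|c - a| \leq |b| < |a|$ forces $|c| = |a|$, so $|a| \in |L^{\times}|$, and the same order argument gives $j \mid e(L/K)$. Hence $\lcm(j,m) \mid e(L/K) \mid [L:K]$, the last divisibility being a standard fact about finite extensions of valued fields.

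I expect the main difficulty to lie in the inversion step of the orbit analysis, specifically the identification $\iota(\zeta(0,s)) = \zeta(0,1/s)$, which requires recognizing that the ``closed disk around infinity'' $\PCv \smallsetminus D(0,1/s)$ and the finite closed disk $\Dbar(0,1/s)$ share the common boundary type~II point $\zeta(0,1/s)$ in the Berkovich tree. Once this geometric fact is in place, everything else is routine.
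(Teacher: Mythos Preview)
Your proposal is correct, but the route differs from the paper's. The paper argues more directly: it sets $c_1=h^{-1}(0)$, $c_2=h^{-1}(\infty)$, $c_3=h^{-1}(1)\in\PP^1(L)$, observes that these lie in three distinct components of $\PBerk\smallsetminus\{\zeta(a,|b|)\}$ (since $h$ is a homeomorphism and $0,1,\infty$ lie in distinct components of $\PBerk\smallsetminus\{\zeta(0,1)\}$), and hence at least two of them, say $c_1,c_2$, lie in $\Dbar(a,|b|)$ in different sub-disks, forcing $|c_1-c_2|=|b|\in|L^{\times}|$; when $|a|>|b|$ this also gives $|c_1|=|a|\in|L^{\times}|$. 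From there the divisibility conclusions are identical to yours.

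The tradeoff: the paper's three-point argument completely sidesteps the inversion computation $\iota(\zeta(0,s))=\zeta(0,1/s)$ that you correctly flag as the delicate step, replacing it with a short connectedness observation. Your approach, on the other hand, yields a clean reusable statement---the $\PGL(2,L)$-orbit of the Gauss point is exactly $\{\zeta(c,s):c\in L,\ s\in|L^{\times}|\}$---which is a nice fact in its own right and makes the deduction of $|b|\in|L^{\times}|$ and the existence of $c\in L$ with $|c-a|\leq|b|$ essentially immediate.
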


\begin{proof}
Let $c_1=h^{-1}(0)$, $c_2=h^{-1}(\infty)$, and $c_3=h^{-1}(1)$,
all of which lie in $\PL$.
Since $h:\PBerk\to\PBerk$ is a homeomorphism
mapping $\zeta(a,|b|)$ to $\zeta(0,1)$, and
since $0$, $\infty$, and $1$ all lie in separate components
of $\PBerk\smallsetminus\{\zeta(0,1)\}$,
the three points $c_1$, $c_2$, and $c_3$
must lie in separate components of $\PBerk\smallsetminus\{\zeta(a,|b|)\}$.
In particular, at most one can lie outside $\Dbar(a,|b|)$.
Without loss, then, $c_1,c_2\in\Dbar(a,|b|)$,
and since both lie in separate components of
$\PBerk\smallsetminus\{\zeta(a,|b|)\}$, we must have
$|b|=|c_2-c_1|\in |L^{\times}|$.
Thus, the ramification index
$[|L^{\times}| : |K^{\times}|]$
of $L/K$ is divisible by $m$.

If $|a|>|b|$, then $|a|=|c_1|\in |L^{\times}|$ as well.
Therefore, $[|L^{\times}| : |K^{\times}|]$
is also divisible by $j$, and hence by $\lcm(j,m)$.
\end{proof}

\begin{proof}[Proof of Theorem~B]
Let $K$, $B$, and $\pi$ be as in the statement of the Theorem.
We may assume that $|p|\leq |\pi|<1$.  After all, if $K$ is discretely
valued, then we may choose $\pi$ to be a uniformizer for $K$.
And if $K$ is not discretely valued, then $|K^{\times}|^B$ is dense
in $(0,\infty)$, and hence there is some $a\in K^{\times}$ such
that $|p|\leq |a|^B<1$.  Thus, replacing $\pi$ by $a^{Bn}\pi$
for an appropriate $n\in\ZZ$, we have $|p|\leq |\pi|<1$, as desired.

Let $\gamma\in\Kbar$ be a $B$-th root of $\pi$, and let $L=K(\gamma)$.
By the defining property of $\pi$, the extension $L/K$ is totally
ramified, with $[L:K]=B$.
We now consider three cases.

\textbf{Case 1:} $d\geq 3$ and $p|d$, so that $B=p^e (d-1)$,
where $d=mp^e$ with $e,m\geq 1$ and $p\nmid m$.  Define $q:=p^e$, and
$$\phi(z) := z + \big(\pi^{-1} z^q - 1 \big)^m \in K[z].$$
Clearly $\deg\phi = mq = d$.  Let $\alpha:=\gamma^{d-1}$
and $\beta:=\gamma^q$, so that $\alpha^q = \beta^{d-1}=\pi$.
We will show that $\phi$ is conjugate over $L$ to a map of good
reduction, but not over any field $\tilde{L}/K$
with $[\tilde{L}:K]<B=q(d-1)$.

Define $h(z) = \beta^{-m}(z - \alpha)$.  
(This conjugating map is chosen to move a fixed point of $\phi$
to $0$ and then to scale to make the polynomial monic.)
Then $h(\zeta(\alpha,|\beta|^m))=\zeta(0,1)$, and
\begin{align}
\label{eq:zeromodp}
h\big(\phi(h^{-1}(z))\big) & =
\beta^{-m}\Big[\beta^{m} z + \alpha +
\Big(\alpha^{-q}\sum_{j=1}^q \binom{q}{j} \alpha^{q-j} \beta^{mj} z^j\Big)^m
 -\alpha\Big]
\notag
\\
&= z + \Big(\sum_{j=1}^q \binom{q}{j} \alpha^{-j} \beta^{mj-1} z^j\Big)^m.
\end{align}
Note that the lead coefficient of the polynomial~\eqref{eq:zeromodp}
is $(\alpha^{-q} \beta^{d-1})^m = 1^m=1$.
Note also that $|\alpha^{-j}\beta^{mj-1}| = |\pi|^{E_j}$, where
$$E_j = -\frac{j}{q} + \frac{mj-1}{d-1} = \frac{j-q}{q(d-1)}
>\frac{-1}{d-1} > -1.$$
Therefore, for
for $1\leq j\leq q-1$, the coefficient of $z^j$ inside the $m$-th power
in~\eqref{eq:zeromodp} has absolute value
$$\Big| \binom{q}{j} \alpha^{-j} \beta^{mj-1} \Big|
\leq |p| \cdot |\pi|^{E_j} < |p| \cdot |\pi|^{-1} \leq 1.$$
Thus, \eqref{eq:zeromodp} is a monic polynomial in $\ints_L[z]$,
and hence it has good reduction.

Finally, by the uniqueness statement of Lemma~\ref{lem:invarII}.b,
given an extension $\tilde{L}/K$ in $\Kbar$ and a map
$\tilde{h}(z)\in\PGL(2,\Kbar)$ for which
$\tilde{h}\circ\phi\circ\tilde{h}^{-1}$ has good reduction,
we must have
$\tilde{h}(\zeta(\alpha,|\beta|^m))=\zeta(0,1)$.  However,
$$|\beta|^m = |\pi|^{m/(d-1)} < |\pi|^{m/d} = |\pi|^{1/q} = |\alpha|.$$
By Theorem~\ref{thm:IIfod}.b, then,
$[\tilde{L}:K]\geq \lcm(q,d-1) = q(d-1) = B$.


\textbf{Case 2:} $d\geq 3$ and $p|(d-1)$, so that $B=p^e d$,
where $d=1+mp^e$ with $e,m\geq 1$ and $p\nmid m$.  Define $q:=p^e$, and
$$\phi(z) :=  z + \frac{\pi^{d-1}}{(z^q - \pi^{q-1})^m} \in K(z).$$
Writing the above expression as a single fraction shows that
$\deg\phi = 1+qm = d$.
Let $\alpha:=\gamma^d$ and $\beta=\gamma^q$, so that
$\alpha^q=\beta^d=\pi$.
We will show that $\phi$ is conjugate over $L$ to a map of good
reduction, but not over any field $\tilde{L}/K$
with $[\tilde{L}:K]<B=qd$.

Define $h(z) = \beta^{-(d-1)}(z - \alpha^{q-1})$.  
(This conjugating map is chosen to move a pole of $\phi$
to $0$ and then to scale to change the numerator of the second
term of $\phi$ from $\pi^{d-1}$ to $1$.)
Then $h(\zeta(\alpha^{q-1},|\beta|^{d-1}))=\zeta(0,1)$, and
\begin{align}
\label{eq:onemodp}
h\big(\phi(h^{-1}(z))\big) & =
\beta^{-(d-1)}\bigg[\beta^{d-1} z + 
\frac{\beta^{d(d-1)}}{\dsps \Big( \sum_{j=1}^q \binom{q}{j}
\alpha^{(q-j)(q-1)} \beta^{j(d-1)} z^j\Big)^m}
\bigg]
\notag
\\
&= z + \frac{1}{\dsps \Big( \sum_{j=1}^q \binom{q}{j}
\alpha^{(q-j)(q-1)} \beta^{-(q-j)(d-1)} z^j\Big)^m}.
\end{align}
Consider the coefficient 
of $z^j$ inside the $m$-power in the denominator of~\eqref{eq:onemodp}.
This coefficient is $1$ for $j=q$, and for $1\leq j\leq q-1$
has absolute value
$$\Big|\binom{q}{j}\Big| \big| \alpha^{q-1}\beta^{-(d-1)}\big|^{q-j}
= \Big|\binom{q}{j}\Big| |\pi|^{E_j} \leq |p| |\pi|^{E_j},$$
where
$$E_j = (q-j)\Big( \frac{q-1}{q} - \frac{d-1}{d} \Big)
= -\frac{(q-j)(d-q)}{dq} > -\frac{(d-q)}{d} > -1.$$
Thus, still for $1\leq j\leq q-1$, the $z^j$-coefficient in question
has absolute value strictly less than $|p|\cdot|\pi|^{-1}\leq 1$.
Hence, the denominator of~\eqref{eq:onemodp} is a monic
polynomial $g(z)\in\ints_L[z]$ with reduction $\bar{g}(z) = z^{d-1}$.
Thus, the rational function
in~\eqref{eq:onemodp} is $(zg(z) + 1)/g(z)$, which has 
reduction $(z^d+1)/z^{d-1}$, exhibiting the desired
good reduction.

Once again,
given an extension $\tilde{L}/K$ in $\Kbar$ and a
linear fractional map
$\tilde{h}(z)\in\PGL(2,\Kbar)$ for which
$\tilde{h}\circ\phi\circ\tilde{h}^{-1}$ has good reduction,
we must have $\tilde{h}(\zeta(\alpha^{q-1},|\beta|^{d-1}))=\zeta(0,1)$,
by the uniqueness statement of Lemma~\ref{lem:invarII}.b.
However,
$$|\beta^{d-1}| = |\pi|^{(d-1)/d} < |\pi|^{(q-1)/q} = |\alpha^{q-1}|.$$
By Theorem~\ref{thm:IIfod}.b, then,
$[\tilde{L}:K]\geq \lcm(d,q) = dq =B$.

\textbf{Case 3:} 
In the remaining case, we have $B=d+1$.
Define
$$\phi(z) :=  \frac{\pi}{z^d},$$
let $\beta=\gamma\in\overline{K}$ be a $(d+1)$-st root of $\pi$,
and let $L:=K(\beta)$, so that $[L:K]=d+1$.
Let $h(z)=\beta^{-1} z$.
Then $h(\zeta(0,|\beta|))=\zeta(0,1)$, and
$$h\circ\phi\circ h^{-1}(z) 
= \frac{\pi}{\beta^{d+1} z^d} = \frac{1}{z^d},$$
which has good reduction.

As before, given an extension $\tilde{L}/K$ in $\Kbar$ and a map
$\tilde{h}(z)\in\PGL(2,\Kbar)$ for which
$\tilde{h}\circ\phi\circ\tilde{h}^{-1}$ has good reduction,
we must have $\tilde{h}(\zeta(0,|\beta|))=\zeta(0,1)$.
By Theorem~\ref{thm:IIfod}.a, then, $[\tilde{L}:K]\geq d+1 = B$.
\end{proof}

\section{Theorem~A and Ramification}
\label{sec:ramify}

The examples in our proof of Theorem~B attain the sharpness of the
bound $B$ by forcing the extension $L/K$ to have ramification degree
at least $B$.  This observation motivates the following conjecture.

\begin{conj}
\label{conj:thmAram}
Let $K$, $\phi$, and $B$ be as in Theorem~A.  
Assume that the residue field $k$ of $K$ is perfect.
If $\phi$ has
potentially good reduction, then there is a totally ramified
extension $L/K$ with $[L:K]\leq B$ such that $\phi$ is
conjugate over $L$ to a map of good reduction.
\end{conj}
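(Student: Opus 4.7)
The plan is to strengthen the case analysis of Theorem~A so that each extension produced is totally ramified, leveraging the perfectness of $k$ as the crucial extra hypothesis.

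First I would attempt the following reduction. Let $\hat{K}^{\mathrm{ur}}$ denote the completion of the maximal unramified extension of $K$; since $k$ is perfect, this has residue field $\bar{k}$, which is algebraically closed. Applying Theorem~A to $\phi$ viewed as an element of $\hat{K}^{\mathrm{ur}}(z)$ yields a finite extension $M/\hat{K}^{\mathrm{ur}}$ of degree at most $B$ over which $\phi$ is conjugate to a map of good reduction; because the residue field is already algebraically closed, this extension is automatically totally ramified. The remaining task is a descent: to realize $M$ in the form $L\cdot\hat{K}^{\mathrm{ur}}$ for some totally ramified $L/K$ with $[L:K]=[M:\hat{K}^{\mathrm{ur}}]$.

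To carry out this descent, I would track how the totally invariant type~II point $\xi$ and the conjugating map $h$ behave under the action of $\Gal(K^{\mathrm{sep}}/K)$. By the uniqueness clause of Lemma~\ref{lem:invarII}.b, $\xi$ is $\Gal(K^{\mathrm{sep}}/K)$-invariant; hence the conjugating map $h$ moving $\xi$ to the Gauss point is determined up to $\PGL(2,\ints_{\hat{K}^{\mathrm{ur}}})$, and its Galois behavior should be controllable by cohomological arguments. Alternatively, one can revisit each case of the proof of Theorem~A and strengthen the auxiliary Theorems~\ref{thm:indiffext}, \ref{thm:attrext}, \ref{thm:twocomps1}, and~\ref{thm:twocomps2} to produce totally ramified extensions directly: in each case the constructed point $\alpha$ is a root of a polynomial whose degree and disk of definition are constrained by the dynamics, and one would try to choose $\alpha$ so that its minimal polynomial over $K$ is Eisenstein, or more generally defines a totally ramified extension.

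The main obstacle is the lack of a natural mechanism, within the existing framework, that forces a specific root of the auxiliary polynomials to generate a totally ramified extension. A concrete concern: if the polynomial $g(t)$ of degree $p^e$ produced by Lemma~\ref{lem:pextn} has a reduction $\bar{g}\in k[t]$ with no linear factor over $k$, then every root of $g$ generates an extension with nontrivial unramified part, and Lemma~\ref{lem:pextn} as stated does not suffice. Overcoming this would require either showing that in the situations arising in the proof of Theorem~A the dynamical constraints force $\bar{g}$ to split---so that a totally ramified root exists---or replacing $g$ with a different auxiliary construction, for instance one modeled on the explicit Eisenstein-type conjugations appearing in the proof of Theorem~B. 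I suspect the latter is the correct route: by reverse-engineering the shape of the totally ramified extensions exhibited in Theorem~B's examples, one should be able to build the conjugating element $h$ directly from the dynamics of $\phi$ at the residues of $\xi$.
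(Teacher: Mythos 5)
This statement is labeled a \emph{Conjecture} in the paper (Conjecture~\ref{conj:thmAram}), and the paper does not prove it. The paper's own discussion reduces the question to strengthening Lemma~\ref{lem:pextn} to produce a totally ramified extension, which it records as Conjecture~\ref{conj:pextnram}, and explains that the analogous strengthenings of Theorems~\ref{thm:twocomps1} and~\ref{thm:twocomps2} \emph{are} straightforward: one can replace the root $\alpha$ by any $\beta\in\Kbar$ with $|\beta|=|\alpha|$ chosen so that $K(\beta)/K$ is totally ramified. So there is no ``paper proof'' for your proposal to match; the statement is genuinely open.

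Your second route (revisiting Theorems~\ref{thm:indiffext}--\ref{thm:twocomps2} case by case) is precisely the route the paper outlines, and your ``concrete concern'' about the reduction $\bar{g}$ of the degree-$p^e$ polynomial having no linear factor over $k$ is exactly the obstacle the paper isolates as Conjecture~\ref{conj:pextnram}. You correctly locate the missing ingredient, but you do not supply it: neither of the two escape routes you sketch (dynamical constraints forcing $\bar{g}$ to split, or a replacement Eisenstein-type construction) is carried out, and it is not at all clear that either works. So the second route reproduces the paper's reduction but does not close the gap.

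Your first route (pass to $\hat{K}^{\mathrm{ur}}$, apply Theorem~A there, then descend) contains a genuine gap at the descent step. Over $\hat{K}^{\mathrm{ur}}$ the residue field is algebraically closed, so the degree-$\leq B$ extension $M$ you obtain is automatically totally ramified --- that part is fine. But it does not follow that $M$ is of the form $L\cdot\hat{K}^{\mathrm{ur}}$ for a \emph{totally ramified} $L/K$ of the same degree, nor that the conjugating map $h\in\PGL(2,M)$ descends to $\PGL(2,L)$. The $\Gal$-invariance of $\xi$ only pins $h$ down modulo the stabilizer of the Gauss point, and the cocycle you would need to trivialize lives in a nonabelian $H^1$; you assert this ``should be controllable by cohomological arguments'' without exhibiting the argument. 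Moreover, the degree bound obtained by running the proof of Theorem~A over $\hat{K}^{\mathrm{ur}}$ can be strictly smaller than the degree needed over $K$ (because polynomials factor further over the larger residue field), so even if descent of the field succeeded, it might not produce the conjugation over a degree-$\leq B$ extension of $K$. In short: this approach is not considered in the paper, and as written it leaves the crux unresolved.
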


Conjecture~\ref{conj:thmAram} would be false without
the hypothesis that $k$ is perfect.
For example, let $K=k((t))$, where $k=\FF_p(s)$,
and define $\phi(z)=t^{1-p}(z^p - s) + z$.
It is easy to check that  $h^{-1}\circ\phi\circ h(z)= z^p+z$, where
$h(z)=tz + \sqrt[p]{s}$, which moves
$\zeta(0,1)$ to $\zeta(\sqrt[p]{s},|t|)$.
Any other coordinate change $\tilde{h}$ also moving
$\zeta(0,1)$ to $\zeta(\sqrt[p]{s},|t|)$
would have to be defined over a field $L$ with residue
field containing $\sqrt[p]{s}$.  Since $[k(\sqrt[p]{s}):k]=p$,
$L/K$ cannot be totally
ramified, as the residue field extension degree must be at least $p$.

A proof of Conjecture~\ref{conj:thmAram} could perhaps proceed
along the same lines as our proof of Theorem~A, provided we
could prove totally ramified versions of the four theorems
in Section~\ref{sec:proofA}.

This is definitely possible for
Theorems~\ref{thm:twocomps1} and~\ref{thm:twocomps2}.
In the proof of each,
after the change of coordinates to move the two $K$-rational points
to $0$ and $\infty$, the point $\alpha$ could be replaced by
some $\beta$ with $|\beta|=|\alpha|$
so that $K(\beta)/K$ is totally ramified
and $[K(\beta):K] \leq [K(\alpha):K]$.
Such $\beta\in\Kbar$ certainly exists; if $n\geq 1$ is
the smallest positive integer such that $|\alpha|^n\in |K|$,
then pick $b\in K$ such that $|b|=|\alpha|^n$,
and let $\beta$ be an $n$-th root of $b$.
By the minimality of $n$, the extension $K(\beta)/K$ 
must be totally ramified.



The obstacle, then, is to prove that we can choose the field extensions
from Theorems~\ref{thm:indiffext} and~\ref{thm:attrext}
to be totally ramified.  Thus,
Conjecture~\ref{conj:thmAram} reduces to proving the
following extension of Lemma~\ref{lem:pextn}.

\begin{conj}
\label{conj:pextnram}
Let $K$, $f$, $V$, $p$, $n$, and $e$ be as in Lemma~\ref{lem:pextn}.
Assume that the residue field $k$ of $K$ is perfect.
Then there is a point $\alpha$ in the disk $V$ such that
$K(\alpha)/K$ is totally ramified, and $[K(\alpha):K]\leq p^e$.
\end{conj}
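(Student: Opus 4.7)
The plan is to refine the construction in Lemma~\ref{lem:pextn} so that the polynomial produced has a root generating a totally ramified extension, using the perfectness of $k$ (together with Hensel's lemma) to eliminate any unramified component.

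First, I would dispose of an easy case: if $V\cap K\neq\varnothing$, take any $\alpha\in V\cap K$, giving $[K(\alpha):K]=1$ and a trivially totally ramified extension. So we may assume $V\cap K=\varnothing$, meaning no $K$-rational translation recenters $V$ at $0$, and the type~II point corresponding to $V$ is not defined over $K$.

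Next, I would carry out the construction of Lemma~\ref{lem:pextn} to produce a polynomial $g(t)\in K[t]$ of degree $q=p^e$ with unit leading coefficient $\binom{n}{q}$ and all roots in $V$, and consider its monic normalization $\tilde g$. The main body of the proof is an induction on $q$, alternating two moves. After a $K$-rational translation chosen to place $V$ as close to the origin as possible, one analyzes the Newton polygon of $\tilde g$. If the polygon has more than one slope, Hensel's lemma (which applies cleanly because $k$ is perfect) splits $\tilde g$ into factors of strictly smaller degree matching the slopes, each with roots in a sub-disk of $V$, and the inductive hypothesis applies. If the polygon consists of a single slope $-v$, then all roots share valuation $v$; adjoining to $K$ an element of valuation $v$ yields a totally ramified extension $K_1/K$ whose degree is the denominator of $v$ in $|K^{\times}|$ and hence divides $q$. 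Over $K_1$ one rescales so that the roots of the transformed polynomial lie in the unit disk with nontrivial reduction; since $K_1/K$ is totally ramified, its residue field is still $k$, and Hensel's lemma combined with perfectness factors this reduction. Picking a linear factor of the reduction and translating by a $K_1$-rational lift, we iterate. The process terminates once the degree drops to $1$, at which point we recover the desired element $\alpha\in V$.

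The main obstacle is bookkeeping: one must show that the compounded ramification across all iterations remains at most $p^e$, and that the $\alpha$ produced at the end really lies in the original disk $V$ (not merely in a translated or rescaled copy of it). The multiplicative combination of Newton-polygon slope denominators with the degrees coming from Hensel factorizations ought to fit inside $p^e$, because they jointly account for the monic degree-$p^e$ polynomial $\tilde g$, but making this precise, while simultaneously tracking the disks through all dilations and translations, is the step that prevents the argument from being routine and is presumably the reason the statement is still only conjectural.
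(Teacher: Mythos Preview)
This statement is presented in the paper as an open conjecture (Conjecture~\ref{conj:pextnram}), not a theorem; the paper offers no proof, so there is nothing to compare your argument against. Your proposal is therefore an attempt at an open problem, and you yourself concede at the end that the bookkeeping step is ``presumably the reason the statement is still only conjectural.''

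Beyond the degree-and-disk bookkeeping you flag, there is a more concrete gap. In the single-slope case, after passing to the totally ramified extension $K_1$ and rescaling, you assert that ``Hensel's lemma combined with perfectness factors this reduction'' and then speak of ``picking a linear factor of the reduction.'' But perfectness of $k$ does not force a linear factor: the reduction can be irreducible of degree $>1$ over $k$ (e.g.\ an Artin--Schreier polynomial $x^p-x-a$ over $\FF_p$, or more generally any separable irreducible polynomial of $p$-power degree). In that case Hensel only lifts the irreducible factor as a block, and its roots generate a genuinely unramified extension of $K_1$ --- exactly what you are trying to avoid. Your dichotomy (multiple Newton slopes versus a single slope with a $k$-rational root in the reduction) omits precisely the case that carries the difficulty, and the induction does not obviously proceed there. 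Handling this case --- or showing that the specific polynomial $g$ of Lemma~\ref{lem:pextn} can never land in it --- is the substantive missing idea, not merely bookkeeping.
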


{\bf Acknowledgements.}
The author thanks Keith Conrad, Xander Faber, and the referee
for their helpful discussions and suggestions.
The author also
gratefully acknowledges the support of NSF grant DMS-1201341.

\end{document}